\newtheorem{theorem}{Theorem}
\newtheorem{lemma}{Lemma}
\newtheorem{proposition}{Proposition}
\newtheorem{corollary}{Corollary}
\newtheorem{remark}{Remark}
\begin{document}

\title{Critically loaded multi-server queues with abandonments, retrials, and time-varying parameters}

\author{Young Myoung Ko and Natarajan Gautam}
\maketitle
\begin{abstract}
In this paper, we consider modeling time-dependent multi-server queues that include abandonments and retrials. For the performance analysis of those, fluid and diffusion models called ``strong approximations'' have been widely used in the literature. Although they are proven to be asymptotically exact, their effectiveness as approximations in \emph{critically loaded regimes} needs to be investigated. To that end, we find that existing fluid and diffusion approximations might be either inaccurate under simplifying assumptions or computationally intractable. To address that concern, this paper focuses on developing a methodology by adjusting the fluid and diffusion models so that they significantly improve the estimation accuracy. We illustrate the accuracy of our adjusted models by performing a number of numerical experiments.
\end{abstract}

\section{Introduction} \label{sec_introduction}
In this paper, we are interested in the precise analysis of time-varying many-server queues with abandonments and retrials described in \citet{Mandelbaum:1998p1029} (See Figure \ref{fig_retrial}).
\begin{figure}
\centering
\includegraphics[width = .6\textwidth]{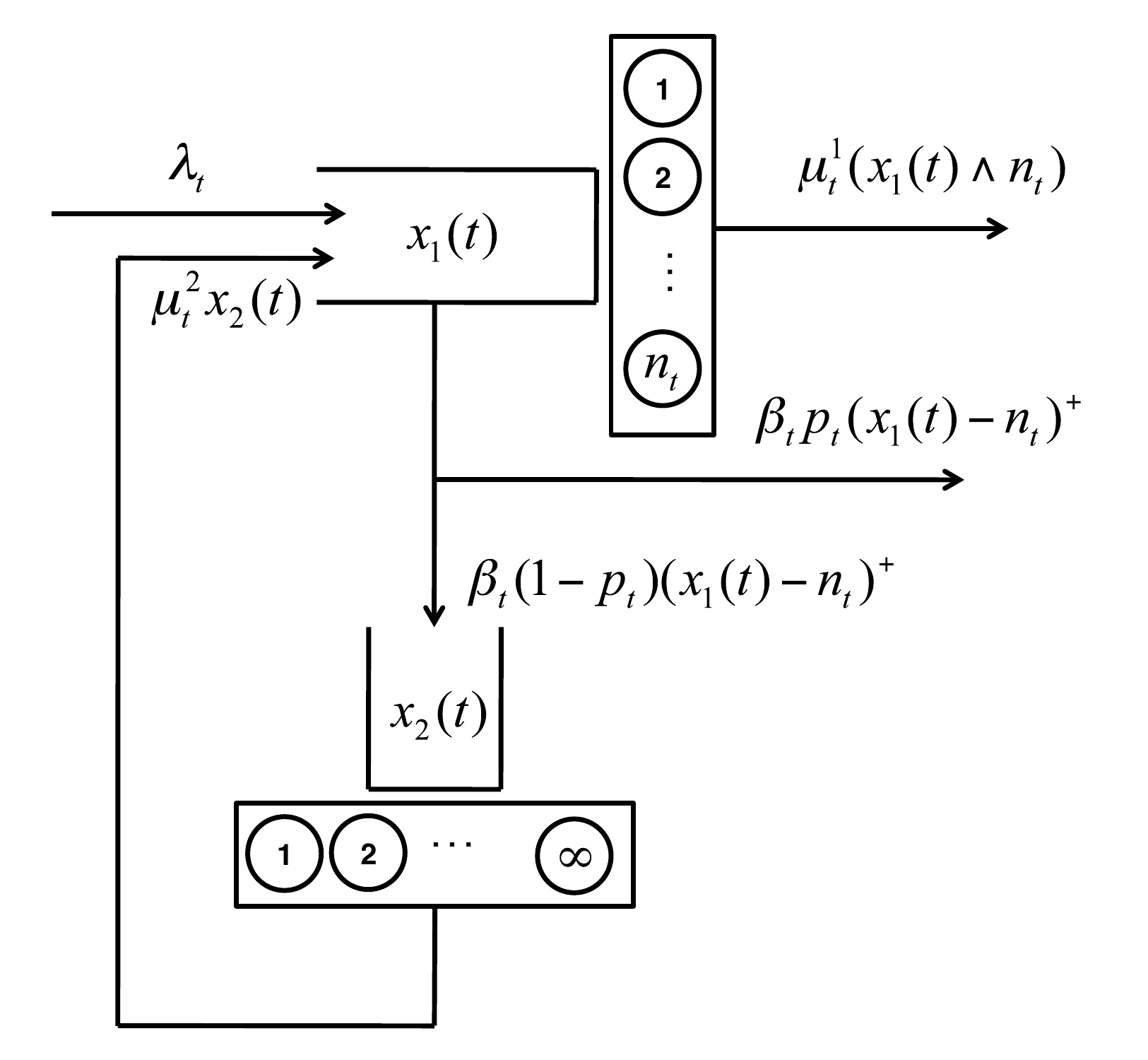}
\caption{Multi-server queue with abandonment and retrials, \citet{Mandelbaum:1998p1029}} \label{fig_retrial}
\end{figure}
Inspired by call centers, there have been extensive studies on multi-server queues, especially having a large number of servers. Most of the recent studies utilize asymptotic analysis as it makes the problem tractable and also provides good approximations under certain conditions. Asymptotic analysis, typically, utilizes weak convergence to fluid and diffusion limits which is nicely summarized in \citet{bill99} and \citet{Whitt02}. Methodologies to obtain fluid and diffusion limits, as described in \citet{Halfin81}, have been developed in the literature using two different ways in terms of the traffic intensity.\\
The first approach is to consider the convergence of a sequence of traffic intensities to a certain value. Relying on the value to which the sequence converges, there are three different operational regimes: efficiency driven (ED), quality and efficiency driven (QED), and quality driven (QD). Roughly speaking, if the traffic intensity ($\rho$) of the limit process is strictly greater than 1, it is called ED regime. If $\rho = 1$, then that is QED, otherwise QD. Many research studies have been done under the ED and QED regimes for multi-server queues like call centers (\citet{Halfin81}, \citet{Puhalskii:2000p1880}, \citet{Garnet:2002p1103}, \citet{Whitt04}, \citet{Whitt06b}, \citet{Pang:2009p1886}). Recently, the QED regime, also known as ``Halfin-Whitt regime'', has received a lot of attention; this is because it actually achieves both high utilization of servers 
and quality of service (\citet{Zeltyn:2005p1072}), and is a favorable operational regime for call centers with strict performance constraints (\citet{Mandelbaum:2009p1992}).\\
The second way to obtain limit processes is to accelerate parameters keeping the traffic intensity fixed. An effective methodology called ``uniform acceleration'' or ``strong approximations'' which enables the analysis of time-dependent queues (\citet{kurtz78}, \citet{mandelbaum95}, \citet{mandelbaum98}, \citet{massey98}, \citet{Whitt90}, \citet{Mandelbaum:1998p1029}, \citet{Hampshire:2009p1879}) is included in this scheme and in fact is the basis of this paper.\\
The advantage of the strong approximations as described in \citet{kurtz78} is that it can be applied to a wide class of stochastic processes and can be nicely extended to time-dependent systems by combining with the results in \citet{Mandelbaum:1998p1029}. However, it cannot be applied to multi-server queues directly due to an assumption that is not satisfied: i.e., for the diffusion model, the differentiability of the rate functions (e.g. net arrival rates and service rates) is necessary. But some rate functions are not differentiable everywhere since they are of the forms, $\min(\cdot,\cdot)$ or $\max(\cdot,\cdot)$. To extend the theory to non-smooth rate functions, \citet{Mandelbaum:1998p1029} proves weak convergence by introducing a new derivative called ``scalable Lipschitz derivative'' and provides models for several queueing systems such as Jackson networks, multi-server queues with abandonments and retrials, multi-class preemptive priority queues, etc. In addition, several sets of differential equations are also provided to obtain the mean value and covariance matrix of the limit processes. It, however, turns out that the resulting sets of differential equations are computationally intractable to solve in general and hence the theorems cannot be applied to obtain numerical values of performance measures. In a follow-on paper, \citet{Mandelbaum:2002p995} provides numerical results for queue lengths and waiting times in multi-server queues with abandonments and retrials by adding an assumption to deal with computational intractability. Specifically, the paper assumes measure zero at a set of time points where the fluid model hits non-differentiable points, which eventually enables us to apply Kurtz's diffusion models. However, as pointed out in \citet{Mandelbaum:2002p995}, if the system stays close to a critically loaded phase for a long time (i.e. \emph{lingering} around a non-differentiable point), their approach may cause significant inaccuracy.

To explain this inaccuracy in detail, consider a multi-server queue with abandonments and retrials as shown in Figure \ref{fig_retrial}. As an example we select numerical values $n_t=50, \mu_t^1=1,\mu_t^2=0.2$ for all $t$, whereas $\lambda_t$ alternates between $\lambda_t^1 = 45$ and $\lambda_t^2=55$ every two units of time (the parameters are defined in Section \ref{sec_problem} and illustrated in Figure \ref{fig_retrial}). Using the measure-zero assumption in \citet{Mandelbaum:2002p995}, we graph $E[x_1(t)]$ and $E[x_2(t)]$ in Figure \ref{fig_eg_inaccuracy} (a), and also $Var[x_1(t)]$, $Var[x_2(t)]$, and $Cov[x_1(t),x_2(t)]$ in Figure \ref{fig_eg_inaccuracy} (b). Notice that, although $E[x_1(t)]$ is reasonably accurate, the others ($E[x_2(t)]$, $Var[x_1(t)]$, $Var[x_2(t)]$, and $Cov[x_1(t),x_2(t)]$) are not accurate at all. The reason for that is the system lingers around the non-differentiable points. In addition, if one were to solve the differential equations numerically using computationally intractable techniques via the Lipschitz derivatives as described in \citet{Mandelbaum:1998p1029}, the similar level of inaccuracy occurs. We explain this in detail in Section \ref{subsec_inaccuracy}. However, this does nourish the need for a methodology to accurately predict the system performance which is the focus of this study.\\
\begin{figure}[htbp]
  \begin{center}
      \subfigure[ Simulation vs Fluid model]{
		\includegraphics[width = .8\textwidth]{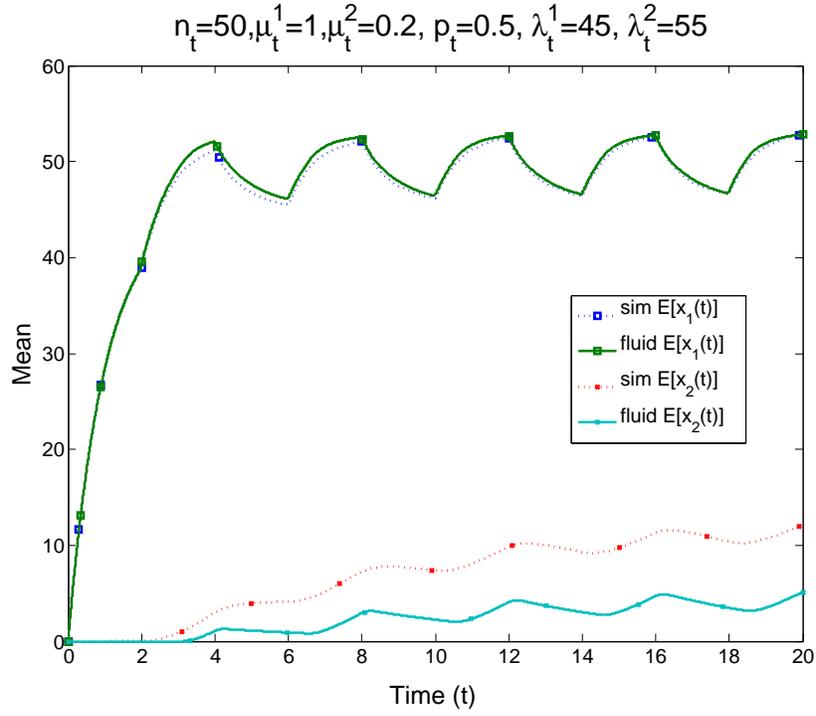}}
      \subfigure[Simulation vs Diffusion model]{
		\includegraphics[width = .8\textwidth]{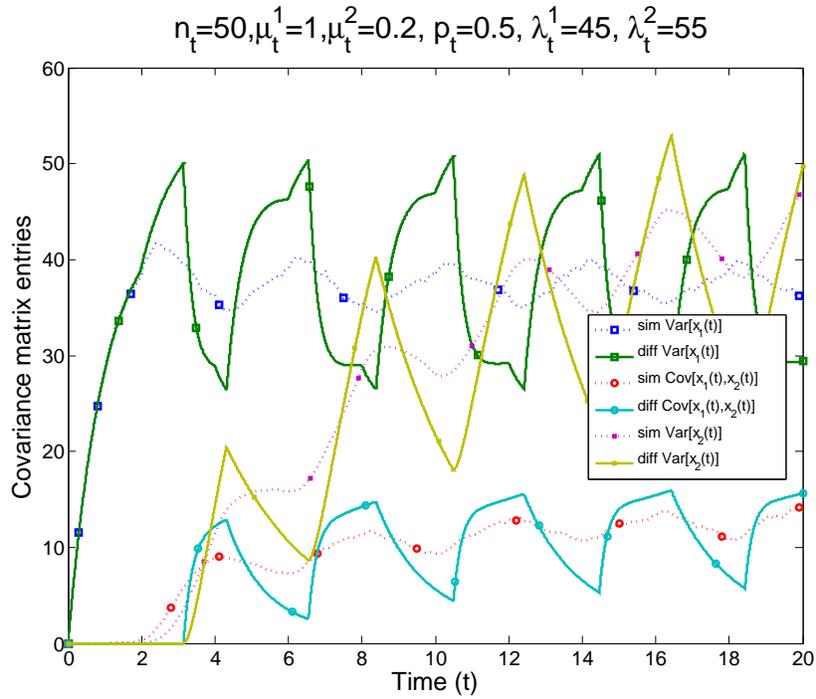}}
    \caption{Simulation vs Fluid and diffusion model with measure-zero assumption}
    \label{fig_eg_inaccuracy}
  \end{center}
\end{figure}
Having motivated the need to develop a methodology for the critically loaded phase, we now describe its importance. According to \citet{mandelbaum98} and \citet{Mandelbaum:2002p995}, time-dependent queues make transitions among three phases: underloaded, critically loaded, and overloaded. The phase of the system is determined by the fluid model. The limit process in the strong approximations does not require any regimes such as QD, QED, or ED. However, from Section 1.4 in \citet{Zeltyn:2005p1072}, we could find a rough correspondence between the operational regimes (QD, QED, and ED) and the phases in time-varying queues (underloaded, critically loaded, and overloaded). Recall that the QED regime is favorable to the operation of the call centers. Therefore, capturing the dynamics of multi-server queues in the critically loaded phase is also of significant importance. Nonetheless, from Figure \ref{fig_eg_inaccuracy}, we found two major issues in the existing approach: 1) the fluid model (where the non-differentiability issue is actually irrelevant) is itself inaccurate and 2) sharp spikes which cause massive estimation errors are observed at the non-differentiable points in the diffusion model in contrast to the smooth curves in the simulation. In this paper, we approach the above two issues from a different point of view and provide an effective solution to them. Considering those, the contributions of this paper can be summarized as follows:
\begin{enumerate}
\item To the best of our knowledge, inaccuracy in the fluid model has never been addressed in the literature. We explain why it happens and ameliorate the fluid model.
\item Sharp spikes observed in the diffusion model cannot be resolved using the methodology in the literature. We provide a reasonable approximation-methodology so that it could smoothen the spikes and improve the estimation accuracy dramatically.
\end{enumerate}
We now describe the organization of this paper. In Section \ref{sec_problem}, we state the problem considered in this paper. In Section \ref{sec_strong}, we summarize the strong approximations in \citet{kurtz78} and \citet{Mandelbaum:1998p1029}, and describe the above issues in detail. In Section \ref{sec_adjustedfluid}, we construct an adjusted fluid model to estimate the exact mean value of the system state. However, this would not immediately result in a computationally feasible approach. For that, in Section \ref{sec_adjusteddiffusion}, we explain our Gaussian-based approximations to achieve computational feasibility and smoothness in the diffusion model. Further investigation on the adjusted models is provided in Section \ref{sec_g} to show how actually our adjusted models contribute to the estimation accuracy. In Section \ref{sec_numerical}, we provide a number of numerical examples and compare against the existing approach as well as simulation. Finally, in Section \ref{sec_conclusion}, we make concluding remarks and explain directions for future work.
\section{Problem description} \label{sec_problem}
Consider Figure \ref{fig_retrial} that illustrates a multi-server queue with abandonments and retrials as described in \citet{Mandelbaum:1998p1029} and \citet{Mandelbaum:2002p995}. There are $n_t$ number of servers in the service node at time $t$. Customers arrive to the service node according to a non-homogeneous Poisson process at rate $\lambda_t$. The service time of each customer follows a distribution having a memoryless property at rate $\mu_t^1$. Customers in the queue are served under the FCFS policy and the abandonment rate of customers is $\beta_t$ with exponentially distributed time to abandon. Abandoning customers leave the system with probability $p_t$ or go to a retrial queue with probability $1-p_t$. The retrial queue is equivalent to an infinite-server-queue and hence each customer in the retrial queue waits there for a random amount of time with mean $1/\mu_t^2$ and returns to the service node.\\
Let $X(t) = \big(x_1(t),x_2(t)\big)$ be the system state where $x_1(t)$ is the number of customers in the service node and $x_2(t)$ is the number of customers in the retrial queue. Then, $X(t)$ is the unique solution to the following integral equations:
\begin{eqnarray}
  x_1(t) &=& x_1(0) +Y_1\Big(\int_{0}^{t}\lambda_s ds\Big) + Y_2\Big(\int_{0}^{t}x_2(s)\mu_s^2ds\Big) - Y_3\Big(\int_{0}^{t}\big(x_1(s)\wedge n_s\big)\mu_s^1ds\Big) \nonumber \\
  && - Y_4\Big(\int_{0}^{t}\big(x_1(s)-n_s\big)^+\beta_s(1-p_s)ds\Big) - Y_5\Big(\int_{0}^{t}\big(x_1(s)-n_s\big)^+\beta_sp_sds\Big), \label{eqn_rx1} \\
  x_2(t) &=& x_2(0) + Y_4\Big(\int_{0}^{t}\big(x_1(s)-n_s\big)^+\beta_s(1-p_s)ds\Big) - Y_2\Big(\int_{0}^{t}x_2(s)\mu_s^2ds\Big), \label{eqn_rx2}
\end{eqnarray}
where $Y_i$'s are independent rate-$1$ Poisson processes.\\
The performance measures we are interested in are $E[X(t)]$ and $Cov[X(t),X(t)]$ (i.e. $Var[x_1(t)]$, $Var[x_2(t)]$, and $Cov[x_1(t),x_2(t)]$) for any given time $t \in [0,T]$, where $T < \infty$ is a constant. Especially, we have an interest in the system that is \emph{lingering} near the critically loaded phase for a long time. Anyhow, as one may notice, the above two equations (\ref{eqn_rx1}) and (\ref{eqn_rx2}) cannot be solved directly. If all the parameters are constant, i.e. $\lambda_t = \lambda, \mu_t^1 = \mu^1, \mu_t^2=\mu^2, \beta_t =\beta,$ and $p_t = p$, one can consider a Continuous Time Markov Chain (CTMC) model to obtain the performance measures. However, even assuming constant parameters, calculating the performance measures at any given time $t$ is hard since $x_1(t)$ and $x_2(t)$ both are unbounded and solving balance equations in the two or more dimensional spaces requires tremendous efforts. Furthermore, when the number of servers is large, computational issues might arise. We, accordingly, would try to take advantage of an asymptotic methodology that is adequate for the analysis of time-varying systems with large number of servers. Nevertheless, as briefly mentioned in Section \ref{sec_introduction}, we found that the existing methodologies are either computationally intractable or significantly inaccurate in the critically loaded phase. The objective of this paper is to develop a new approach to enhance the accuracy in estimating the mean value and covariance matrix for the multi-server queues with abandonments and retrials.\\
To do so, we start by summarizing the strong approximations and addressing the potential limitations in the following section.  
\section{Summary of the strong approximations} \label{sec_strong}
In Section \ref{subsec_strong}, we recapitulate the strong approximations in \citet{kurtz78} and \citet{Mandelbaum:1998p1029}. In Section \ref{subsec_inaccuracy}, we explain what produces estimation errors and why existing methodologies do not fix them.  
\subsection{Strong approximations} \label{subsec_strong}
In this section, we review the fluid and diffusion approximations developed by \citet{kurtz78} that we would leverage upon for our methodology. We also briefly mention the result in \citet{Mandelbaum:1998p1029} which extends Kurtz's result to models involving non-smooth rate functions. Moreover, it is worthwhile to note that for $n\in \mathbf{N}$, the state of the queueing system $X_n(t)$ includes jumps but the limit process is continuous. Therefore, the weak convergence result that is presented is with respect to uniform topology in Space $D$ (\citet{bill99} and \citet{Whitt02}).\\
Let $X(t)$ be an arbitrary $d$-dimensional stochastic process which is the solution to the following integral equation:
\begin{eqnarray}
	X(t) = x_0 + \sum_{i=1}^{k} l_i Y_{i} \bigg(\int_{0}^{t} f_{i}\big(s,X(s)\big)ds \bigg), \label{eqn_001}
\end{eqnarray}
where $x_0 = X(0)$ is a constant, $Y_{i}$'s are independent rate-$1$ Poisson processes, $l_i \in \mathbf{Z}^d$ for $i \in \{1,2,\ldots, k\}$ are constant, and $f_i$'s are continuous functions such that $|f_i(t,x)| \le C_i (1+|x|)$ for some $C_i < \infty$, $t\le T$ and $T<\infty$. Note that we just consider a finite number of $l_i$'s to simplify proofs, which is reasonable for real world applications. \\
Notice that a special case of $X(t)$ in equation (\ref{eqn_001}) is the $X(t)$ we described in equations (\ref{eqn_rx1}) and (\ref{eqn_rx2}) in our problem explained in Section \ref{sec_problem}. Following the notation in equation (\ref{eqn_001}), we have, for our problem in Section \ref{sec_problem}, $x=(x_1,x_2)$ and $t\le T$, 
\begin{align*}
  f_1(t,x) &= \lambda_t,  f_2(t,x) = \mu_t^2 x_2,   f_3(t,x) = \mu_t^1(x_1 \wedge n_t),\\
  f_4(t,x) &= \beta_t(1-p_t)(x_1-n_t)^+,  f_5(t,x) = \beta_tp_t(x_1-n_t)^+, \\
 l_1 &= \binom{1}{0}, l_2 = \binom{1}{-1},  l_3 = \binom{-1}{0}, l_4=\binom{-1}{1}, \textrm{ and } l_5=\binom{-1}{0}.
\end{align*}
Coming back to the generalized $X(t)$ process, we reiterate that it is usually not tractable to solve the integral equation (\ref{eqn_001}). Therefore, to approximate the $X(t)$ process, define a sequence of stochastic processes $\{X_n(t)\}$ which satisfy the following integral equation:
\begin{eqnarray*}
	X_n(t) = x_0 + \sum_{i=1}^{k} \frac{1}{n} l_i Y_{i} \bigg(\int_{0}^{t} n f_{i}\big(s, X_n(s)\big)ds \bigg). \label{eqn_002}
\end{eqnarray*}
Typically the process $X_n(t)$ (usually called a scaled process) is obtained by taking $n$ times faster rates of events and $1/n$ of the increment of the system state. This type of setting is used in the literature and is denoted as ``uniform acceleration'' in \citet{massey98}, \citet{Mandelbaum:1998p1029}, and \citet{Mandelbaum:2002p995}. Then, the following theorem provides the fluid model to which $\{X_n(t)\}$ converges almost surely as $n\rightarrow \infty$. Define
\begin{eqnarray}
	F(t,x) = \sum_{i=1}^{k} l_i f_{i}(t,x) \label{eqn_F}.
\end{eqnarray}
\begin{theorem}[Fluid model, \citet{kurtz78}] \label{theo_fluid}
If there is a constant $M < \infty$ such that $|F(t,x)-F(t,y)| \le M|x-y|$ for all $t \le T$ and $T<\infty$. Then, $\lim_{n \rightarrow \infty} X_n(t) = \bar{X}(t)$ a.s. where $\bar{X}(t)$ is the solution to the following integral equation:
\begin{eqnarray*}
	\bar{X}(t) = x_0 + \sum_{i=1}^{k} l_i \int_{0}^{t} f_{i}\big(s, \bar{X}(s)\big)ds. \label{eqn_003}
\end{eqnarray*}
\end{theorem}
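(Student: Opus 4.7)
The plan is to follow the standard path-wise argument that Kurtz introduced for density-dependent jump Markov processes, adapted to the non-homogeneous time-inhomogeneous setting. The key tool is the functional strong law for the rate-$1$ Poisson process, namely $\sup_{u\le U}\bigl|\tfrac{1}{n}Y_i(nu)-u\bigr|\to 0$ almost surely for every fixed $U<\infty$, which follows from the ordinary SLLN together with the monotonicity of Poisson sample paths.

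First I would establish an a priori bound on $X_n$. Using the linear growth assumption $|f_i(t,x)|\le C_i(1+|x|)$, the triangle inequality, and the Poisson SLLN on a fixed compact time interval $[0,T]$, one shows that $\sup_{n,\,t\le T}|X_n(t)|\le K$ almost surely on an event of full probability (after introducing a standard localization at a stopping time $\tau_n^R = \inf\{t:|X_n(t)|\ge R\}$ and sending $R\to\infty$ via Gronwall). This step guarantees that the random time-change arguments inside the Poisson processes, $u_i^n(t):=\int_0^t f_i(s,X_n(s))\,ds$, stay in a bounded interval $[0,U]$ with $U=T\max_i C_i(1+K)$, so that the functional SLLN for $Y_i$ applies uniformly.

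Next I would perform the central decomposition. Define the centered processes $\tilde Y_i(u)=Y_i(u)-u$ and write
\begin{align*}
X_n(t)-\bar X(t) &= \sum_{i=1}^k l_i\Bigl[\tfrac{1}{n}Y_i\bigl(n u_i^n(t)\bigr) - \int_0^t f_i(s,\bar X(s))\,ds\Bigr] \\
&= \sum_{i=1}^k l_i\cdot\tfrac{1}{n}\tilde Y_i\bigl(n u_i^n(t)\bigr) + \int_0^t \bigl[F(s,X_n(s))-F(s,\bar X(s))\bigr]\,ds,
\end{align*}
where the second equality regroups the deterministic pieces using the definition $F=\sum_i l_i f_i$. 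Let $\varepsilon_n := \sum_{i=1}^k |l_i|\sup_{u\le U}\tfrac{1}{n}|\tilde Y_i(nu)|$; by the Poisson SLLN $\varepsilon_n\to 0$ a.s. Using the Lipschitz hypothesis on $F$ in the integral term gives the path-wise estimate
\begin{equation*}
|X_n(t)-\bar X(t)|\;\le\;\varepsilon_n + M\int_0^t |X_n(s)-\bar X(s)|\,ds,\qquad t\le T.
\end{equation*}
Gronwall's inequality then yields $\sup_{t\le T}|X_n(t)-\bar X(t)|\le \varepsilon_n e^{MT}\to 0$ a.s., which is the desired almost sure uniform convergence. Existence and uniqueness of $\bar X$ as solution to the ODE follow from Lipschitzness of $F$ by the standard Picard argument, so the limit is unambiguous.

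The only delicate point is controlling the argument of the Poisson processes uniformly in $n$, since $u_i^n(t)$ depends on the random path $X_n$; this is the reason for the a priori bound step. Once $X_n$ is known to live in a compact set with high probability, the Poisson SLLN can be invoked uniformly and the remainder is a routine Gronwall closing. Time-inhomogeneity of the $f_i$ plays no role beyond continuity in $s$, since the Lipschitz constant $M$ is uniform in $t\le T$.
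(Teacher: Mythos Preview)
The paper does not supply its own proof of this statement; Theorem~\ref{theo_fluid} is quoted verbatim as a known result from \citet{kurtz78} and used as a black box throughout. There is therefore nothing in the paper to compare your argument against.

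That said, your proposal is essentially the standard Kurtz argument (as in \citet{kurtz78} and Chapter~11 of \citet{ethier86}): center the Poisson processes, invoke the functional strong law $\sup_{u\le U}\bigl|\tfrac{1}{n}Y_i(nu)-u\bigr|\to 0$ a.s., and close with Gronwall using the Lipschitz hypothesis on $F$. The decomposition you write is correct, and you correctly flag the one genuinely delicate point, namely controlling the random time changes $u_i^n(t)=\int_0^t f_i(s,X_n(s))\,ds$ so that the functional SLLN applies over a fixed interval $[0,U]$. The usual way this is handled is not by first proving a uniform-in-$n$ bound $\sup_{n,t\le T}|X_n(t)|\le K$ (which is slightly awkward to get directly, since the Poisson fluctuations enter the bound), but rather by running the entire Gronwall argument up to the stopping time $\tau_n^R=\inf\{t:|X_n(t)|\ge R\}$, concluding $\sup_{t\le T\wedge\tau_n^R}|X_n(t)-\bar X(t)|\to 0$, and then observing that boundedness of $\bar X$ on $[0,T]$ forces $\tau_n^R>T$ for all large $n$ once $R>\sup_{t\le T}|\bar X(t)|$. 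Your sketch gestures at this but conflates the two orderings; either route works, and the argument is sound.
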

Note that $\bar{X}(t)$ is a deterministic time-varying quantity. We will subsequently connect $\bar{X}(t)$ and $X(t)$ defined in equation (\ref{eqn_001}), but before that we provide the following result. Once we have the fluid model, we can obtain the diffusion model from the scaled centered process ($D_n(t)$). Define $D_n(t)$ to be $\sqrt{n}\big(X_n(t) - \bar{X}(t)\big)$. Then, the limit process of $D_n(t)$ is provided by the following theorem.
\begin{theorem}[Diffusion model, \citet{kurtz78}] \label{theo_diffusion}
If $f_i$'s and $F$, for some $M<\infty$, satisfy
\begin{eqnarray*}
	|f_i(t,x)-f_i(t,y)| \le M|x-y| \quad \textrm{and} \quad  \bigg| \frac{\partial}{\partial x_i}F(t,x)\bigg| \le M, \qquad \textrm{for } i \in \{1,\ldots,k\} \textrm{ and } 0\le t\le T,
\end{eqnarray*}
then $\lim_{n \rightarrow \infty} D_n(t) = D(t)$ where $D(t)$ is the solution to
\begin{eqnarray*}
	D(t) = \sum_{i=1}^{k} l_i \int_{0}^{t} \sqrt{f_i\big(s,\bar{X}(s)\big)}dW_i(s) + \int_{0}^{t} \partial F\big(s,\bar{X}(s)\big)D(s) ds, \label{eqn_004}
\end{eqnarray*}
$W_i(\cdot)$'s are independent standard Brownian motions, and $\partial F(t,x)$ is the gradient matrix of $F(t,x)$ with respect to $x$.
\end{theorem}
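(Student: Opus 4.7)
The plan is to decompose $D_n(t)$ into a martingale part (whose limit will produce the Brownian integrals) and a drift part (which will, after linearization, produce the integral involving $\partial F$), and then close the argument with a Gronwall-type stability estimate together with the random time change theorem.

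First I would use the centering identity $Y_i(u)=u+\widetilde{Y}_i(u)$, where $\widetilde{Y}_i$ is the compensated (mean-zero) Poisson martingale, to rewrite
\begin{eqnarray*}
D_n(t) \;=\; \sqrt{n}\bigl(X_n(t)-\bar X(t)\bigr)
       \;=\; \sum_{i=1}^{k} l_i\,\sqrt{n}\int_{0}^{t}\bigl(f_i(s,X_n(s))-f_i(s,\bar X(s))\bigr)\,ds
       \;+\; M_n(t),
\end{eqnarray*}
where
\begin{eqnarray*}
M_n(t) \;=\; \sum_{i=1}^{k} \frac{l_i}{\sqrt{n}}\,\widetilde{Y}_i\!\left(n\int_{0}^{t} f_i(s,X_n(s))\,ds\right).
\end{eqnarray*}
The martingale part $M_n$ is handled by the functional CLT for Poisson processes, $\frac{1}{\sqrt{n}}\widetilde{Y}_i(n\cdot)\Rightarrow W_i(\cdot)$ in the uniform topology on $D[0,T]$, combined with the random time change theorem. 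Using Theorem~\ref{theo_fluid} (so that $X_n\to\bar X$ a.s.\ uniformly on $[0,T]$) together with continuity of $f_i$ and the non-negativity of $f_i(s,\bar X(s))$, the integrand in the time change converges to $\int_0^t f_i(s,\bar X(s))\,ds$. This gives $M_n\Rightarrow \sum_i l_i W_i(\int_0^\cdot f_i(s,\bar X(s))\,ds)$, which by a standard time-change representation is in distribution equal to $\sum_i l_i \int_0^\cdot \sqrt{f_i(s,\bar X(s))}\,dW_i(s)$.

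Next I would treat the drift term by a first-order Taylor expansion. The Lipschitz bound $|f_i(t,x)-f_i(t,y)|\le M|x-y|$ guarantees that each $f_i$ admits, for a.e.\ $s$, a measurable selection of gradients bounded by $M$, while the bound on $\partial F$ ensures the identity
\begin{eqnarray*}
\sum_{i=1}^{k} l_i\bigl(f_i(s,X_n(s))-f_i(s,\bar X(s))\bigr)
\;=\; \partial F(s,\bar X(s))\bigl(X_n(s)-\bar X(s)\bigr) + R_n(s),
\end{eqnarray*}
with a remainder satisfying $|R_n(s)|=o(|X_n(s)-\bar X(s)|)$ uniformly on compacts. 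Multiplying by $\sqrt{n}$ converts the linear part into $\partial F(s,\bar X(s))D_n(s)$, and by Theorem~\ref{theo_fluid} the scaled remainder $\sqrt{n}\,R_n(s)$ vanishes in probability, uniformly on $[0,T]$. This yields
\begin{eqnarray*}
D_n(t) \;=\; \int_{0}^{t} \partial F(s,\bar X(s))\,D_n(s)\,ds \;+\; M_n(t) \;+\; \varepsilon_n(t),\qquad \|\varepsilon_n\|_{[0,T]}\xrightarrow{p}0.
\end{eqnarray*}

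Finally, I would close the argument by noting that the map $\Phi$ sending a continuous driving path $m$ to the unique solution $d$ of the affine Volterra equation $d(t)=\int_0^t \partial F(s,\bar X(s))\,d(s)\,ds+m(t)$ is continuous in the uniform topology; this is the linear version of Gronwall's inequality together with Picard iteration, and it is applicable because $\partial F$ is bounded along the (deterministic) fluid trajectory. Since $M_n+\varepsilon_n$ converges weakly to the Gaussian driving process identified above, the continuous mapping theorem gives $D_n=\Phi(M_n+\varepsilon_n)\Rightarrow \Phi(\text{limit})=D$, where $D$ satisfies the stated integral equation. I expect the main obstacle to be the drift step: making the Taylor remainder bound rigorous without assuming $C^1$ regularity of the $f_i$ requires either replacing $\partial F$ by an a.e.\ gradient along the deterministic curve $\bar X$ or invoking the scalable Lipschitz derivative machinery of \citet{Mandelbaum:1998p1029}, and verifying that the $o(\cdot)$ remainder is indeed $o_p(1/\sqrt{n})$ after multiplication by $\sqrt{n}$.
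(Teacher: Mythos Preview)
The paper does not supply its own proof of Theorem~\ref{theo_diffusion}; the result is simply quoted from \citet{kurtz78} as background, so there is nothing in the paper to compare your argument against.

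That said, your sketch is the standard route and is essentially how Kurtz's proof proceeds: center the Poisson processes, apply the functional CLT plus a random time change to the martingale part, linearize the drift via a first-order expansion of $F$, and close with a Gronwall/continuous-mapping argument on the affine Volterra equation. One comment on your final caveat: your worry about needing $C^1$ regularity of the individual $f_i$ is misplaced in this particular theorem. The hypothesis $\bigl|\tfrac{\partial}{\partial x_i}F(t,x)\bigr|\le M$ already presupposes that $F=\sum_i l_i f_i$ (not each $f_i$ separately) is differentiable with bounded gradient, and since the drift term is exactly $\sqrt{n}\bigl(F(s,X_n(s))-F(s,\bar X(s))\bigr)$, the Taylor expansion is applied to $F$ directly, not to the $f_i$'s. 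The scalable Lipschitz derivative of \citet{Mandelbaum:1998p1029} is invoked in the paper precisely for the situation where this differentiability hypothesis on $F$ fails (Remark~\ref{rem_nondiff}), which is outside the scope of the present statement.
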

\begin{remark} \label{rem_nondiff}
Theorem \ref{theo_diffusion} requires that $F(\cdot,\cdot)$ has a continuous gradient matrix. Therefore, if we don't have such an $F$, then we cannot apply Theorem \ref{theo_diffusion} directly to obtain the diffusion model.
\end{remark}
\begin{remark} \label{rem_gaussian}
According to \citet{ethier86}, if $D(0)$ is a constant or a Gaussian random vector, then $D(t)$ is a Gaussian process.
\end{remark}
Now, we have the fluid and diffusion models for $X_n(t)$. Therefore, for a large $n$, $X_n(t)$ is approximated by
\begin{eqnarray*}
	X_n(t) \approx \bar{X}(t) + \frac{D(t)}{\sqrt{n}}. \label{eqn_005}
\end{eqnarray*}
If we follow this approximation, we can also approximate the mean and covariance matrix of $X_n(t)$ denoted by $E\big[X_n(t)\big]$ and $Cov \big[X_n(t),X_n(t)\big]$ respectively as
\begin{eqnarray}
	E\big[X_n(t)\big] &\approx& \bar{X}(t) + \frac{E\big[D(t)\big]}{\sqrt{n}}, \label{eqn_006} \\
	Cov \big[X_n(t),X_n(t)\big] &\approx& \frac{Cov \big[D(t),D(t)\big]}{n}. \label{eqn_007}
\end{eqnarray}
In equations (\ref{eqn_006}) and (\ref{eqn_007}), only $\bar{X}(t)$ is known. Therefore, in order to get approximated values of $E\big[X_n(t)\big]$ and $Cov \big[X_n(t),X_n(t)\big]$, we need to obtain $E\big[D(t)\big]$ and $Cov\big[D(t),D(t)\big]$. The following theorem provides a methodology to obtain $E\big[D(t)\big]$ and $Cov \big[D(t),D(t)\big]$.
\begin{theorem}[Mean and covariance matrix of linear stochastic systems, \citet{arnold92}] \label{theo_moment}
Let $Y(t)$ be the solution to the following linear stochastic differential equation.
\begin{eqnarray*}
	dY(t) = A(t)Y(t)dt + B(t)dW(t), \quad Y(0)=0, \label{eqn_008}
\end{eqnarray*}
where $A(t)$ is a $d \times d$ matrix, $B(t)$ is a $d \times k$ matrix, and W(t) is a $k$-dimensional standard Brownian motion.
Let $M(t) = E\big[Y(t)\big]$ and $\Sigma(t) = Cov\big[Y(t), Y(t)\big]$. Then, $M(t)$ and $\Sigma(t)$ are the solution to the following ordinary differential equations:
\begin{eqnarray}
	\frac{d}{dt}M(t) &=& A(t) M(t) \label{eqn_009} \nonumber \\
	\frac{d}{dt}\Sigma(t) &=& A(t) \Sigma(t) + \Sigma(t) A(t)' + B(t)B(t)'. \label{eqn_010}
\end{eqnarray}
\end{theorem}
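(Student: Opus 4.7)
My plan is to read the SDE in its integral form
\[
Y(t) = \int_0^t A(s)Y(s)\,ds + \int_0^t B(s)\,dW(s),
\]
take expectations to obtain the ODE for $M(t)$, and then apply Itô's formula to the tensor $Y(t)Y(t)^\top$ to obtain the Lyapunov-type ODE for $\Sigma(t)$. This is the standard route for linear SDEs with additive noise, and the hypotheses of Theorem \ref{theo_diffusion} already deliver enough regularity of $A(t)$ and $B(t)$ (boundedness on $[0,T]$) so that all stochastic integrals that appear are genuine martingales with zero mean.

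First I would dispatch the mean. Taking expectations of the integral equation and invoking Fubini together with the martingale property of $\int_0^t B(s)\,dW(s)$, I get $M(t) = \int_0^t A(s)M(s)\,ds$; differentiating gives $\frac{d}{dt}M(t) = A(t)M(t)$, which is the claimed equation (note $M(0)=0$, so in fact $M(t)\equiv 0$, a fact I will use below to identify $\Sigma(t)$ with the raw second-moment matrix).

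Next I would compute the covariance. Apply Itô's product formula component-wise to $Y_i(t)Y_j(t)$, or equivalently in matrix form
\[
d\bigl(Y(t)Y(t)^\top\bigr) = (dY(t))Y(t)^\top + Y(t)(dY(t))^\top + d\langle Y, Y^\top\rangle_t,
\]
where the quadratic covariation works out to $d\langle Y, Y^\top\rangle_t = B(t)B(t)^\top dt$ since only the $B(t)\,dW(t)$ part contributes. Substituting the drift and diffusion of $Y(t)$ and taking expectations, the two stochastic-integral contributions vanish (again by the martingale property, with integrability ensured by the a priori bound $E\lVert Y(t)\rVert^2 < \infty$ that follows from Gronwall applied to the linear growth of $A$ and $B$). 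This produces
\[
\frac{d}{dt}E\bigl[Y(t)Y(t)^\top\bigr] = A(t)E\bigl[Y(t)Y(t)^\top\bigr] + E\bigl[Y(t)Y(t)^\top\bigr]A(t)^\top + B(t)B(t)^\top.
\]
Because $M(t)\equiv 0$, we have $\Sigma(t) = E[Y(t)Y(t)^\top]$, and the claimed Lyapunov ODE for $\Sigma(t)$ follows.

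The only step I expect to require care is the justification that the Itô integrals drop out in expectation; one needs $E\int_0^t \lVert Y(s)B(s)\rVert^2 ds < \infty$, which requires an a priori $L^2$ bound on $Y(t)$. I would obtain it by a standard Gronwall argument on $E\lVert Y(t)\rVert^2$, using the boundedness of $A$ and $B$ on the compact interval $[0,T]$ inherited from Theorem \ref{theo_diffusion}. Everything else is mechanical Itô calculus.
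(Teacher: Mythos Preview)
Your argument is correct and is the standard textbook derivation of the moment equations for a linear SDE with additive noise. The paper, however, does not supply a proof of this theorem at all: it is quoted as a classical result from \citet{arnold92} and used as a black box (see the statement of Theorem~\ref{theo_moment} and its immediate Corollary~\ref{cor_moment}). So there is no ``paper's own proof'' to compare against; your write-up simply fills in what the authors delegated to the reference, and it does so along the same lines Arnold uses (integral form plus It\^o's product rule on $Y(t)Y(t)^\top$, with the stochastic integrals dropping out by the martingale property). The Gronwall-based $L^2$ bound you flag is indeed the only place requiring care, and your sketch handles it correctly.
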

\begin{corollary} \label{cor_moment}
If $M(0)=0$, then $E\big[M(t)\big] = 0$ for $t \ge 0$.
\end{corollary}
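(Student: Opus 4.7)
The plan is to read the corollary as a direct consequence of the mean equation displayed in Theorem \ref{theo_moment}. Since $M(t)=E[Y(t)]$ is a deterministic function of $t$, $E[M(t)]$ coincides with $M(t)$ itself, so the claim reduces to showing that $M(t)\equiv 0$ on $[0,\infty)$ whenever $M(0)=0$.

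First I would invoke Theorem \ref{theo_moment} to assert that $M(t)$ satisfies the linear homogeneous ordinary differential equation
\[
\frac{d}{dt}M(t) = A(t)\, M(t), \qquad M(0)=0.
\]
Next I would exhibit the trivial candidate solution $\widetilde{M}(t)\equiv 0$: it clearly satisfies the initial condition, and substituting it into the right-hand side yields $A(t)\cdot 0 = 0$, which matches $\frac{d}{dt}\widetilde{M}(t) = 0$. So $\widetilde{M}$ is a solution.

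Finally, uniqueness of solutions for the linear initial-value problem above (standard Picard--Lindel\"of under the implicit integrability/continuity of $A(\cdot)$ already assumed in Theorem \ref{theo_moment}) forces $M(t)=\widetilde{M}(t)=0$ for every $t\ge 0$. Combining with the observation that $M(t)$ is deterministic, we conclude $E[M(t)]=M(t)=0$ for all $t\ge 0$, as claimed.

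There is essentially no obstacle here; the only subtlety worth flagging is the slight notational redundancy in writing $E[M(t)]$ for an already-deterministic quantity, which I would briefly remark upon to justify reducing the statement to $M(t)\equiv 0$ before appealing to uniqueness.
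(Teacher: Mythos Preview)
Your argument is correct and is precisely the intended justification: the paper states the corollary without proof, treating it as an immediate consequence of the mean equation in Theorem~\ref{theo_moment}, and your reduction to uniqueness for the linear homogeneous initial-value problem $\frac{d}{dt}M(t)=A(t)M(t)$, $M(0)=0$, is exactly that. Your remark on the redundant expectation $E[M(t)]$ is also apt, since $M(t)$ is already deterministic.
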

By Corollary \ref{cor_moment}, if $D(0)=0$, then $E\big[D(t)\big] = 0$ for $t \ge 0$. Therefore, if $\bar{X}(0) = X(0) = x_0$, then we can rewrite (\ref{eqn_006}) to be
\begin{eqnarray*}
	E\big[X_n(t)\big] &\approx& \bar{X}(t). \label{eqn_011}
\end{eqnarray*}
Recalling Remark \ref{rem_nondiff}, the diffusion model in \citet{kurtz78} requires differentiability of rate functions. Otherwise, we cannot apply Theorem \ref{theo_diffusion}. To get this problem under control, \citet{Mandelbaum:1998p1029} introduces a new derivative called ``scalable Lipschitz derivative'' and proves weak convergence using it. Unlike the result in \citet{kurtz78}, it turns out that the diffusion limit may not be a Gaussian process when rate functions are not differentiable everywhere. In \citet{Mandelbaum:1998p1029}, expected values of the diffusion model may not be zero (compare it with Corollary \ref{cor_moment}) and could adjust the inaccuracy in the fluid model (see \citet{Mandelbaum:2002p995}). The resulting differential equations for the diffusion model, however, are computationally intractable. For example, in \citet{Mandelbaum:1998p1029}, one of the differential equations has the following form:
\begin{eqnarray}
	\frac{d}{dt}E\big[Q_1^{(1)}(t)\big] &=& (\mu_t^1 \mathbf{1}_{\{Q_1^{(0)} \le n_t\}} + \beta_t \mathbf{1}_{\{Q_1^{(0)} > n_t\}})E\big[Q_1^{(1)}(t)^-\big] \nonumber \\ 
		&& - (\mu_t^1 \mathbf{1}_{\{Q_1^{(0)} < n_t\}} + \beta_t \mathbf{1}_{\{Q_1^{(0)} \ge n_t\}})E\big[Q_1^{(1)}(t)^+\big] + \mu_t^2E\big[Q_2^{(1)}(t)\big], \label{eqn_actdiff}
\end{eqnarray}
rendering it to be intractable.\\
Therefore, \citet{Mandelbaum:2002p995}, as we understand, resorts to the method in \citet{kurtz78} by assuming measure zero at non-smooth points to avoid computational difficulty.\\
\subsection{Inaccuracy of strong approximations} \label{subsec_inaccuracy}
Though not mentioned in any previous studies, to the best of our knowledge, the fluid model has the possibility of being inaccurate when approximating the mean value of the system state. Consider the actual integral equation to get the exact value of $E\big[X(t)\big]$ by the following theorem.
\begin{theorem}[Expected value of $X(t)$] \label{theo_exp}
Consider $X(t)$ defined in equation (\ref{eqn_001}). Then, for $t\le T$, $E\big[X(t)\big]$ is the solution to the following integral equation.
\begin{eqnarray}
	E\big[X(t)\big] = x_0 + \sum_{i=1}^k l_i \int_{0}^{t}E\Big[f_i\big(s,X(s)\big)\Big] ds \label{eqn_013}
\end{eqnarray}
\begin{proof}
Take expectation on both sides of equation (\ref{eqn_001}). Then, 
\begin{eqnarray}
	E\big[X(t)\big] &=& x_0 + \sum_{i=1}^k l_i E\Bigg[Y_i\bigg(\int_{0}^{t}f_i\big(s,X(s)\big)ds \bigg)\Bigg] \nonumber \\
		&=& x_0 + \sum_{i=1}^k l_i E\bigg[\int_{0}^{t}f_i\big(s, X(s)\big)ds \bigg] \textrm{ since $Y_i(\cdot)$'s are non-homogeneous Poisson processes} \nonumber \\
		&=& x_0 + \sum_{i=1}^k l_i \int_{0}^{t}E\Big[f_i\big(s, X(s)\big)\Big] ds \textrm{  by Fubini theorem in \citet{folland99}.} \nonumber
\end{eqnarray}
Therefore, we prove the theorem. 
\end{proof}
\end{theorem}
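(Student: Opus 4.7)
The plan is to apply $E[\cdot]$ to both sides of (\ref{eqn_001}), commute it past each $Y_i$, and then swap expectation with the time integral by Fubini. Linearity of expectation trivially handles the finite sum over $i$, so the only substantive moves are the identity $E[Y_i(\Lambda_i(t))] = E[\Lambda_i(t)]$ with $\Lambda_i(t) := \int_0^t f_i(s,X(s))\,ds$, and the interchange $E[\int_0^t f_i(s,X(s))\,ds] = \int_0^t E[f_i(s,X(s))]\,ds$.

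For the first identity, I would use the compensated-Poisson martingale $M_i(u) = Y_i(u) - u$. In the natural joint filtration generated by all the driving Poisson processes, $\Lambda_i(t)$ is nondecreasing and adapted, hence takes values in the stopping times of this filtration. Under the linear growth bound $|f_i(s,x)| \le C_i(1+|x|)$ together with a priori integrability of $\sup_{s \le T}|X(s)|$ (a standard consequence of the well-posedness of (\ref{eqn_001}) on $[0,T]$), $\Lambda_i(t)$ is integrable, so optional stopping applied to $M_i$ yields $E[Y_i(\Lambda_i(t))] = E[\Lambda_i(t)]$. For the second step, nonnegativity of each $f_i$ (it is a Poisson rate) makes Tonelli's theorem directly applicable, producing the interchange. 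Combining the two gives $E[X(t)] = x_0 + \sum_{i=1}^{k} l_i \int_0^t E[f_i(s,X(s))]\,ds$ as claimed.

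The main obstacle is justifying the first identity: because $X(s)$ is built from all the Poisson processes, including $Y_i$ itself, $Y_i$ and $\Lambda_i(t)$ are \emph{not} independent, so the identity does not follow from a quick conditioning argument of the form ``$E[Y_i(\tau)] = E[\tau]$ when $Y_i \perp \tau$.'' The martingale viewpoint coupled with optional stopping is what makes it go through, and the author's terse aside ``since $Y_i(\cdot)$'s are non-homogeneous Poisson processes'' is essentially a shorthand pointer to exactly this fact; everything else (linearity and Fubini/Tonelli) is routine.
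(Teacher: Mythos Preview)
Your proposal is correct and follows the same three-step skeleton as the paper: take expectations, pass the expectation through the Poisson time change to obtain $E[Y_i(\Lambda_i(t))]=E[\Lambda_i(t)]$, then interchange expectation and the $ds$-integral. The only difference is that you justify the middle step via the compensated-Poisson martingale and optional stopping (and flag the non-independence of $Y_i$ and $\Lambda_i(t)$), whereas the paper simply invokes ``since $Y_i(\cdot)$'s are non-homogeneous Poisson processes'' and cites Fubini for the last step.
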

Comparing Theorems \ref{theo_fluid} and \ref{theo_exp}, notice that we cannot conclude that $\bar{X}(t)$ in Theorem \ref{theo_fluid} and $E\big[X(t)\big]$ in Theorem \ref{theo_exp} are close enough since $E\big[f_i(t, X(t))\big] \neq f_i\big(t, E[X(t)]\big)$. In some applications, $f_i$'s might be constants or linear combinations of components of $X(t)$. In those cases, Theorem \ref{theo_exp} and the following corollary imply that the fluid model would be the exact mean value of the system state.
\begin{corollary} \label{cor_exp}
If $f_i(t,x)$'s are constants or linear combinations of the components of $x$, Then,
\begin{eqnarray}
	E[X(t)] = \bar{X}(t), \nonumber
\end{eqnarray}
where $X(t)$ is the solution to (\ref{eqn_001}) and $\bar{X}(t)$ is the deterministic fluid model from theorem \ref{theo_fluid}.
\begin{proof}
Using linearity of expectation in \citet{williams91}, we can obtain the same integral equation for both $E\big[X(t)\big]$ and $\bar{X}(t)$.
\end{proof}
\end{corollary}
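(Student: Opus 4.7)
The plan is to show that $E[X(t)]$ and $\bar{X}(t)$ satisfy the same integral equation and then invoke uniqueness of its solution. First I would start from Theorem \ref{theo_exp}, which gives
\begin{eqnarray*}
    E[X(t)] = x_0 + \sum_{i=1}^k l_i \int_0^t E\Big[f_i\big(s,X(s)\big)\Big]\,ds.
\end{eqnarray*}
Under the hypothesis, each $f_i(t,x)$ has the form $f_i(t,x) = a_i(t) + b_i(t)^\top x$ for some deterministic $a_i(t)$ and $b_i(t)$ (taking $b_i(t)\equiv 0$ covers the constant case and $a_i(t)\equiv 0$ the purely linear case). By linearity of expectation, $E[f_i(s,X(s))] = a_i(s) + b_i(s)^\top E[X(s)] = f_i\big(s, E[X(s)]\big)$, so $E[X(t)]$ satisfies
\begin{eqnarray*}
    E[X(t)] = x_0 + \sum_{i=1}^k l_i \int_0^t f_i\big(s, E[X(s)]\big)\,ds,
\end{eqnarray*}
which is exactly the integral equation defining $\bar{X}(t)$ in Theorem \ref{theo_fluid}.

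Next I would appeal to uniqueness: since each $f_i$ is affine in $x$ with bounded deterministic coefficients on $[0,T]$, the aggregated drift $F(t,x) = \sum_i l_i f_i(t,x)$ is globally Lipschitz in $x$ on $[0,T]$, which is precisely the hypothesis of Theorem \ref{theo_fluid}. A standard Gr\"onwall argument then yields at most one continuous solution to the fluid integral equation with initial value $x_0$. Since both $E[X(t)]$ and $\bar{X}(t)$ are such solutions, they must coincide on $[0,T]$.

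The only subtlety I anticipate is verifying that $E[X(t)]$ is well-defined and continuous in $t$ so that it is a legitimate candidate in the uniqueness argument; this follows from the linear growth bound $|f_i(t,x)| \le C_i(1+|x|)$ assumed for equation (\ref{eqn_001}), which propagates through the Poisson-martingale structure to give $\sup_{t\le T} E[|X(t)|] < \infty$ and hence the integrand in Theorem \ref{theo_exp} is integrable. With this in hand the proof is essentially one line, exactly as the author indicates: linearity of expectation plus uniqueness of the fluid ODE.
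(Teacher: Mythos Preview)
Your proposal is correct and follows essentially the same approach as the paper: use linearity of expectation to show $E[X(t)]$ satisfies the same integral equation as $\bar{X}(t)$. The paper's one-line proof leaves the uniqueness step implicit, whereas you spell it out via the Lipschitz condition and a Gr\"onwall argument; this extra care is warranted but does not constitute a different method.
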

However, if we have different forms of $f_i$'s where $E\big[f_i(t, X(t))\big] \neq f_i\big(t, E[X(t)]\big)$, then the fluid model would be inaccurate. Notice that the fluid model does not require the differentiability of rate functions in both \citet{kurtz78} and \citet{Mandelbaum:1998p1029}. Therefore, in this problem, the differentiability issue in rate functions is actually irrelevant.\\ 
\begin{figure}[htbp]
  \begin{center}
      \subfigure[ Simulation vs Fluid model]{
		\includegraphics[width = .8\textwidth]{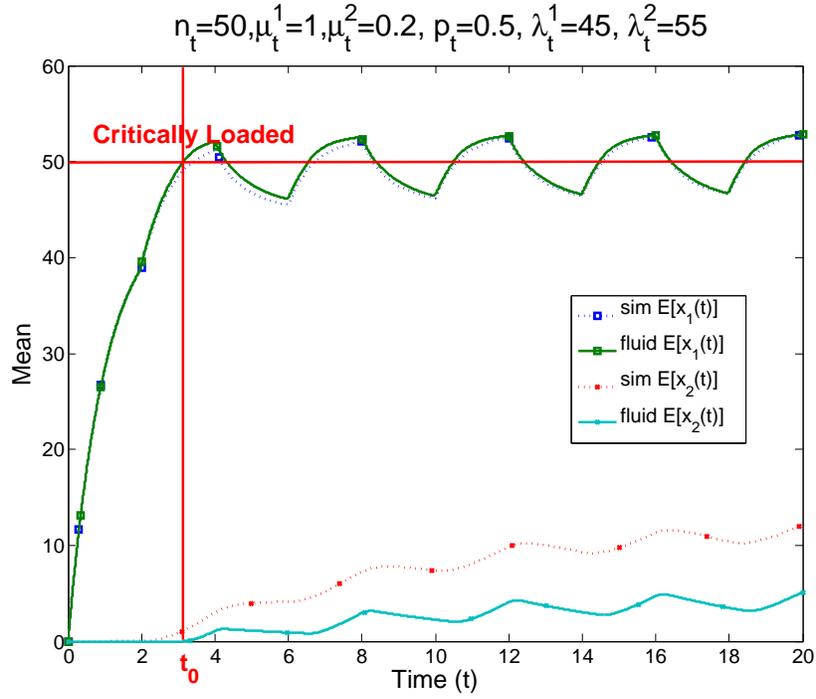}}
      \subfigure[Simulation vs Diffusion model]{
		\includegraphics[width = .8\textwidth]{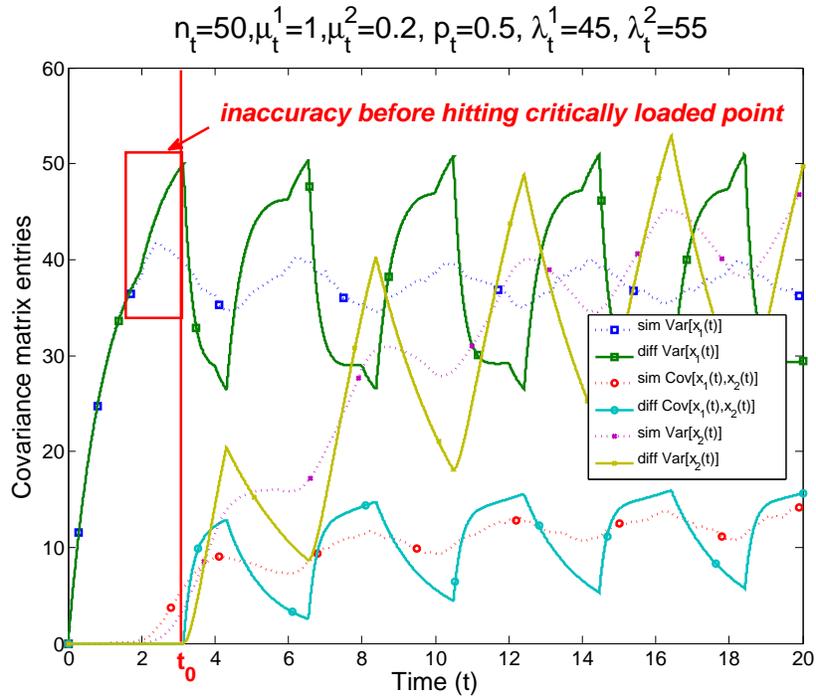}}
    \caption{Simulation vs Fluid and diffusion model with measure-zero assumption}
    \label{fig_eg_annotated}
  \end{center}
\end{figure}
Now we move our attention to inaccuracy in the diffusion model. We use the annotated version of Figure \ref{fig_eg_inaccuracy} (via Figure \ref{fig_eg_annotated}) here for the clear explanation. Figures \ref{fig_eg_annotated} (a) and (b) show the mean value and covariance matrix of the system against those of the simulation respectively. Since the number of servers is $50$, as shown in Figure \ref{fig_eg_annotated} (a), the mean value of $x_1(t)$ is fluctuating near the critically loaded point. From the figure, we also confirm that the fluid model is quite inaccurate for the mean value of $x_2(t)$. For the covariance matrix, as shown in Figure \ref{fig_eg_annotated} (b), the diffusion model brings about immense estimation errors (sharp spikes) in the vicinity of the critically loaded time points. Notice that from Figure \ref{fig_eg_annotated} (b) we found that \emph{even if the differential equations such as equation (\ref{eqn_actdiff}) in \citet{Mandelbaum:1998p1029}, which are known to be true, can be numerically solvable, it does not contribute to improving the estimation accuracy}. In the figure, the time point $t_0$ is the time when the fluid model hits a critically loaded point for the first time. \emph{The differential equations in \citet{Mandelbaum:1998p1029} are virtually same as those in \citet{Mandelbaum:2002p995} which assume measure zero for the computational tractability until the fluid model reaches a critically loaded point for the first time}. Therefore, we can think that the graphs before time $t_0$ in Figure \ref{fig_eg_annotated} are exactly same as those obtained from the methodology in \citet{Mandelbaum:1998p1029} though we could not get the graphs after $t_0$. However, as seen in Figure \ref{fig_eg_annotated} (b), the estimation errors become apparent much earlier than the time point $t_0$. Therefore, we figure out that the methodology in \citet{Mandelbaum:1998p1029} does not remove the sharp spikes at least until the time $t_0$. Moreover, from the shapes of the differential equations, we would conjecture that the methodology in \citet{Mandelbaum:1998p1029} might not get rid of the sharp spikes even after the time $t_0$. The drift matrix of the diffusion model in \citet{Mandelbaum:1998p1029} still makes sudden changes at the critically loaded point which actually causes the spikes. We will revisit and explain it in Section \ref{sec_g}.\\
In the next two sections, we describe our approach to the above issues in both fluid and diffusion models. In Section \ref{sec_adjustedfluid}, we address the inaccuracy in the fluid model by a constructing new process. In particular, in Section \ref{sec_adjusteddiffusion}, based on the adjusted fluid model, we explain how to remove the sharp spikes that causes vast estimation errors in the diffusion model. 
\section{Adjusted fluid model} \label{sec_adjustedfluid}
The basic idea of our approach is to construct a new process, $Z(t)$), so that its fluid model is exactly the same as the mean value of the original process $X(t)$ as described in Theorem \ref{theo_exp} (this is schematically explained in Figure \ref{fig_newprocess}).
\begin{figure}
\centering
\includegraphics[width = .5\textwidth]{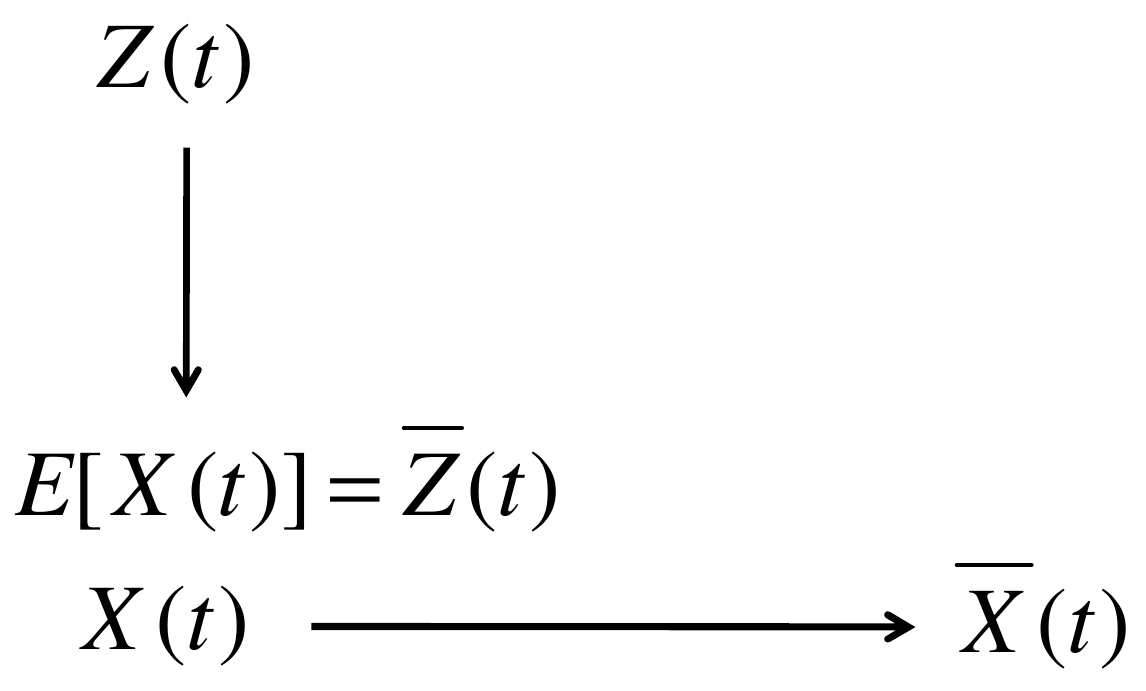}
\caption{Construction of a new process} \label{fig_newprocess}
\end{figure}
Although we concentrate on multi-server queues, this approach can be applied to more general types of stochastic systems. Therefore, we borrow the more general notation in Section \ref{subsec_strong} (as opposed to that in Section \ref{sec_problem}).\\
To begin with, define a set $\mathbb{F}$ of all distribution functions that have a finite mean and covariance matrix in $\mathbf{R}^d$. This set is valid for the fluid model since conditions on $f_i$'s guarantee that $E\big[|X(t)|\big] < \infty$ and $|Cov[X(t),X(t)]| < \infty$ for all $t \le T$. Define a subset $\mathbb{F}_0$ of $\mathbb{F}$  such that any $h \in \mathbb{F}_0$ has zero mean. We call an element of $\mathbb{F}_0$ a ``base distribution'' for the remainder of this paper.
\begin{proposition} \label{prop_exp}
For $t \le T$ and $i \in {1,2, \ldots, k}$, let $\mu(t) = E[X(t)]$. Then, $E\big[f_i(t,X(t))\big]$ can be represented as a function of $\mu(t)$, i.e., there exists a function $g_i(t,\cdot)$ such that
\[g(t,\mu(t)) = E\big[f_i(t,X(t))\big].\]
\begin{proof}
For fixed $t_0 \le T$, suppose the distribution of $X(t_0)$ is $F$. Then, $F \in \mathbb{F}$. For $F \in \mathbb{F}$, we can always find $F_0 \in \mathbb{F}_0$ such that $F(x) = F_0(x-\mu)$ where $\mu = E[X(t_0)] = \int_{\mathbf{R}^d} x dF$. Then,
\begin{eqnarray*}
	E\big[f_i(t_0,X(t_0))\big] &=& \int_{\mathbf{R}^d} f_i(t_0,x) dF \\
		&=& \int_{\mathbf{R}^d} f_i(t_0,x+\mu) dF_0.
\end{eqnarray*}
Since the integration removes $x$, by making $t_0$ and $\mu$ variables (i.e. substitute $t_0$ and $\mu$ with $t$ and $\mu(t)$ respectively), we have
\begin{eqnarray*}
	E\big[f_i(t,X(t))\big] = g_i(t,\mu(t)), \textrm{ for some function } g_i.
\end{eqnarray*}
\end{proof}
\end{proposition}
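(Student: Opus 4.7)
The plan is to reparametrize the distribution of $X(t)$ in terms of its mean: every $F \in \mathbb{F}$ can be written as the translation by its mean of a zero-mean element of $\mathbb{F}_0$, and once this translation is pulled outside the integral, what remains depends on $t$ and $\mu(t)$ only. Thus the existence of $g_i$ will be established by an explicit construction rather than by an abstract representation argument.

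The concrete steps are as follows. First, fix an arbitrary $t_0 \le T$ and let $F$ denote the distribution of $X(t_0)$. The growth condition $|f_i(s,x)| \le C_i(1+|x|)$ together with the integral equation (\ref{eqn_001}) gives $E[|X(t_0)|] < \infty$, so $F \in \mathbb{F}$ and $\mu(t_0) = \int_{\mathbf{R}^d} x\,dF(x)$ is well defined. Second, introduce the centered distribution $F_0$ defined by $F_0(y) := F(y + \mu(t_0))$, i.e., the law of $X(t_0) - \mu(t_0)$; by construction $F_0 \in \mathbb{F}_0$. Third, apply a translation change of variable to write
\begin{equation*}
E\big[f_i(t_0, X(t_0))\big] = \int_{\mathbf{R}^d} f_i(t_0, x)\,dF(x) = \int_{\mathbf{R}^d} f_i\big(t_0, y + \mu(t_0)\big)\,dF_0(y).
\end{equation*}
Finally, letting $t_0$ range over $[0,T]$ and denoting by $F_0^{(t)}$ the centered law at time $t$, define
\begin{equation*}
g_i(t, m) := \int_{\mathbf{R}^d} f_i(t, y + m)\,dF_0^{(t)}(y),
\end{equation*}
and substitute $m = \mu(t)$ to obtain the claimed identity $g_i(t,\mu(t)) = E[f_i(t,X(t))]$.

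The main subtlety that I would flag — and essentially the only one, since each of the steps above is routine — is that the function $g_i$ constructed this way is \emph{not} an intrinsic property of $f_i$ alone: it depends implicitly on the family of base distributions $\{F_0^{(t)}\}_{t \le T}$, which in turn are determined by the law of the process $X(\cdot)$ itself. Because the proposition asserts only existence, this implicit dependence is harmless here, but it means the result should be read as a notational device: along the trajectory of distributions generated by the actual process, each expectation $E[f_i(t,X(t))]$ admits a parametrization indexed by $t$ and $\mu(t)$. Turning $g_i$ into an operational object in the later construction of an adjusted fluid model will require committing to a concrete parametric family of base distributions (for example mean-zero Gaussians), but that refinement is outside the scope of the current statement.
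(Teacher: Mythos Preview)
Your proof is correct and follows essentially the same route as the paper: center the law of $X(t_0)$ by its mean, perform the translation change of variable in the integral, and read off $g_i$ as a function of $(t,\mu)$. Your write-up is in fact slightly more explicit (you give the formula $g_i(t,m)=\int f_i(t,y+m)\,dF_0^{(t)}(y)$ rather than just asserting existence), and your closing caveat about the hidden dependence on the base distribution is exactly the content of the Remark the paper places immediately after the proposition.
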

\begin{remark}
Proposition \ref{prop_exp} does not mean that $\mu(\cdot)$ completely identifies the function $g_i(\cdot,\cdot)$. In fact, the function $g_i(\cdot,\cdot)$ might be unknown unless the base distribution is identified but we can say that such a function $g_i(\cdot,\cdot)$ exists.
\end{remark}
For $t  \le T$, let $\mu(t) = E\big[X(t)\big]$. Let $g_i\big(t,\mu(t)\big) = E\big[f_i(t,X(t))\big]$ for $i \in \{1, \ldots, k\}$. Then, we can construct a new stochastic process $Z(t)$ which is the solution to the following integral equation:
\begin{eqnarray}
	Z(t) = z_0 + \sum_{i=1}^{k} l_i Y_{i} \bigg(\int_{0}^{t} g_{i}\big(s,Z(s)\big)ds \bigg). \label{eqn_014}
\end{eqnarray}
Based on equation (\ref{eqn_014}), define a sequence of stochastic processes $\{Z_n(t)\}$ satisfying
\begin{eqnarray}
	Z_n(t) = x_0 + \sum_{i=1}^{k} \frac{1}{n} l_i Y_{i} \bigg(\int_{0}^{t} n g_{i}\big(s, Z_n(s)\big)ds \bigg). \label{eqn_adjseq}
\end{eqnarray}
Next, we would like to obtain the fluid model for $Z_n(t)$. Before doing that, we need to check whether the functions $g_i$'s satisfy the conditions to apply Theorem \ref{theo_fluid}. Following lemmas show that $g_i$'s actually meet those conditions. The proofs of the lemmas are provided in Appendix \ref{app_lemmas}.
\begin{lemma} \label{lem_condition1}
If $|f_i(t,x)| \le C_i (1+|x|)$ for $t\le T$, then $g_i(t,x)$'s satisfy
\begin{eqnarray*}
	|g_i(t,x)| &\le& D_i (1+|x|) \quad \textrm{for some } D_i < \infty. \label{eqn_015} 
\end{eqnarray*}
\end{lemma}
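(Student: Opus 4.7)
The plan is to use the integral representation for $g_i$ that emerges from the proof of Proposition \ref{prop_exp}. Concretely, for each fixed $t \le T$, let $F_0^{(t)}$ denote the mean-zero base distribution associated with $X(t)$, so that the distribution of $X(t)$ equals the translate of $F_0^{(t)}$ by $\mu(t) = E[X(t)]$. Then
\begin{eqnarray*}
g_i(t,\mu) = \int_{\mathbf{R}^d} f_i(t, y+\mu)\, dF_0^{(t)}(y).
\end{eqnarray*}
Applying the hypothesis $|f_i(t,x)| \le C_i(1+|x|)$ pointwise inside the integral together with the triangle inequality $|y+\mu| \le |y| + |\mu|$ yields
\begin{eqnarray*}
|g_i(t,\mu)| \le C_i\bigl(1 + |\mu| + E[|X(t)-\mu(t)|]\bigr).
\end{eqnarray*}

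The next step is to bound $K := \sup_{t\le T} E[|X(t)-\mu(t)|]$ by a finite constant. Since $E[|X(t)-\mu(t)|] \le 2 E[|X(t)|]$, it suffices to show $\sup_{t\le T} E[|X(t)|] < \infty$. I would derive this directly from the integral equation (\ref{eqn_001}): taking norms, using the fact that $E[Y_i(A)] = A$ for rate-$1$ Poisson processes, and invoking the Fubini interchange (as in Theorem \ref{theo_exp}) gives
\begin{eqnarray*}
E[|X(t)|] \le |x_0| + \sum_{i=1}^{k} |l_i| C_i \int_0^t \bigl(1+E[|X(s)|]\bigr)\,ds,
\end{eqnarray*}
and Gronwall's inequality then delivers a finite bound on $[0,T]$.

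Combining the two steps, with $K<\infty$ obtained above, I would conclude
\begin{eqnarray*}
|g_i(t,\mu)| \le C_i(1+K+|\mu|) \le C_i(1+K)(1+|\mu|),
\end{eqnarray*}
so that $D_i := C_i(1+K)$ works. Renaming $\mu$ back to $x$ (since the lemma states its bound in terms of the argument of $g_i$) completes the argument.

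The main obstacle I anticipate is the uniformity of $K$ in $t$: the base distribution $F_0^{(t)}$ varies with $t$, so an a priori estimate on the absolute first moment of the centered process $X(t)-\mu(t)$ is essential. The Gronwall step handles this cleanly, but it requires being careful that the growth condition on $f_i$ is used in its $E[|\cdot|]$ form rather than being applied with $|E[\cdot]|$, since $|f_i|$ is not assumed convex in $x$. Everything else is routine application of the growth hypothesis and the definition of $g_i$.
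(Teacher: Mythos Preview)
Your argument is correct, and it is a genuinely different route from the paper's. The paper's proof does not invoke Gronwall's inequality on the integral equation; instead it exploits the finiteness of the covariance matrix of $X(t)$ (already asserted when defining the set $\mathbb{F}$). Concretely, for fixed $t_0$ the paper uses Cauchy--Schwarz in one dimension, $E[|x_j|] \le \sqrt{\mu_j^2+\sigma_j^2} \le |\mu_j| + \sigma_j$, sums over coordinates, and arrives at $E[|X|] \le Dd(1+|\mu|)$ with $D=\max(1,\sigma_1,\ldots,\sigma_d)$. It then substitutes this into $E[|f_i(X)|] \le C_i(1+E[|X|])$ and closes by noting that the covariance (hence $D$) is bounded on $[0,T]$. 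Your route replaces the second-moment input by a first-moment Gronwall estimate drawn directly from equation~(\ref{eqn_001}); this is more self-contained in that it does not presuppose the covariance bound, and your explicit constant $D_i=C_i(1+K)$ is cleaner. The paper's approach, on the other hand, makes the dependence on the variance transparent and avoids re-deriving moment bounds on the trajectory.

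One minor remark on your write-up: when you say ``$E[Y_i(A)]=A$'' you mean $E[Y_i(A)]=E[A]$ (since $A=\int_0^t f_i(s,X(s))\,ds$ is random); this is exactly the martingale/Fubini step used in Theorem~\ref{theo_exp}, and your subsequent inequality already incorporates it correctly.
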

For the next lemma, we would like to define
\begin{eqnarray}
  G(t,x) = \sum_{i=1}^k l_i g_i(t,x). \label{eqn_G}
\end{eqnarray}
\begin{lemma} \label{lem_condition2}
For $t\le T$, if $|f_i(t,x)-f_i(t,y)| \le M |x-y|$, then $g_i(t,x)$'s satisfy
\begin{eqnarray*}
	|g_i(t,x) - g_i(t,y)| \le M|x-y|,
\end{eqnarray*}
and if $|F(t,x)-F(t,y)| \le M |x-y|$, then $G(t,x)$ satisfies
\begin{eqnarray*}
	|G(t,x) - G(t,y)| \le M|x-y|. \label{eqn_019} 
\end{eqnarray*}
\end{lemma}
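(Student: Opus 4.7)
The plan is to exploit the integral representation of $g_i$ that emerges from the proof of Proposition \ref{prop_exp}. There, for fixed $t$ with $\mu = E[X(t)]$ and $F_0$ the (centered) base distribution of $X(t) - \mu$, we have
\[
g_i(t,\mu) = \int_{\mathbf{R}^d} f_i(t, x + \mu)\, dF_0(x).
\]
So $g_i(t,\cdot)$ is an averaging of translates of $f_i(t,\cdot)$ against a (time-dependent) probability measure $F_0$ with zero mean. The key insight is that any pointwise Lipschitz bound on $f_i(t,\cdot)$ passes directly through this averaging, because the translation preserves the Lipschitz constant and the base measure has total mass one.

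Concretely, for the first assertion I would write
\[
g_i(t,x) - g_i(t,y) = \int_{\mathbf{R}^d}\bigl[f_i(t, z+x) - f_i(t, z+y)\bigr]\, dF_0(z),
\]
apply the triangle inequality to pull the absolute value inside the integral, and then use $|f_i(t,z+x) - f_i(t,z+y)| \le M|(z+x) - (z+y)| = M|x-y|$, which is independent of $z$. Integrating the constant $M|x-y|$ against the probability measure $dF_0$ yields exactly $M|x-y|$, giving the desired bound. The second assertion for $G$ follows the same template: by linearity of the integral and the definitions (\ref{eqn_F}) and (\ref{eqn_G}),
\[
G(t,x) = \sum_{i=1}^{k} l_i g_i(t,x) = \int_{\mathbf{R}^d} F(t, z+x)\, dF_0(z),
\]
so repeating the above argument with the Lipschitz bound on $F$ in place of the one on $f_i$ yields $|G(t,x) - G(t,y)| \le M|x-y|$.

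I do not anticipate a serious obstacle in the calculation itself; the only subtlety worth being careful about is interpreting $g_i(t,\cdot)$ at arguments other than $\mu(t)$. Proposition \ref{prop_exp} pins down $g_i(t,\mu(t))$ via the actual distribution of $X(t)$; for the Lipschitz estimate we implicitly extend $g_i(t,\cdot)$ by the same integral formula using the fixed (time-$t$) base distribution $F_0$ and arbitrary shift arguments $x,y$. Once that interpretation is recorded, both bounds reduce to a one-line application of Jensen/triangle inequality and the hypothesized Lipschitz property, and no differentiability of $f_i$ or $F$ is needed.
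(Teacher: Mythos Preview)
Your proposal is correct and follows essentially the same approach as the paper: both proofs use the integral representation $g_i(t,\mu)=\int f_i(t,z+\mu)\,dH_0(z)$ with a fixed base (centered) distribution, subtract, pull the absolute value inside, apply the Lipschitz bound on $f_i$ (respectively $F$), and integrate the resulting constant against the probability measure. The subtlety you flag---that one must interpret $g_i(t,\cdot)$ via the same base distribution at arbitrary shift arguments $x,y$---is exactly what the paper handles (somewhat implicitly) by introducing two random variables with the same base distribution $H_0$ but different means $\mu_1,\mu_2$.
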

Lemmas \ref{lem_condition1} and \ref{lem_condition2} show that if $f_i$'s satisfy the conditions to obtain the fluid limit of $X_n(t)$, then $g_i$'s are also eligible for the fluid model of $Z_n(t)$. Therefore, we are now able to provide the adjusted fluid model based on Lemmas \ref{lem_condition1} and \ref{lem_condition2}.
\begin{theorem}[Adjusted fluid model] \label{theo_modfluid}
Assume
\begin{eqnarray}
	\big|f_i(t,x)\big| &\le& C_i\big(1+|x|\big) \quad \textrm{for } i\in \{1,\ldots, k\}, \label{eqn_024}\\
	\big|F(t,x)-F(t,y)\big| &\le& M|x-y|. \label{eqn_025}
\end{eqnarray}
Then, $\lim_{n \rightarrow \infty} Z_n(t) = \bar{Z}(t)$ a.s., where $\bar{Z}(t)$ is the solution to the following integral equation:
\begin{eqnarray}
	\bar{Z}(t) = x_0 + \sum_{i=1}^{k} l_i \int_{0}^{t} g_{i}\big(s, \bar{Z}(s)\big)ds, \label{eqn_026}
\end{eqnarray}
and furthermore
\begin{eqnarray}
	\bar{Z}(t) = E\big[X(t)\big] = x_0 + \sum_{i=1}^k l_i \int_{0}^{t}E\Big[f_i\big(s,X(s)\big)\Big] ds. \label{eqn_027}
\end{eqnarray}
\begin{proof}
From Lemmas \ref{lem_condition1} and \ref{lem_condition2}, (\ref{eqn_024}) and (\ref{eqn_025}) imply
\begin{eqnarray*}
	|g_i(t,x)|\le D_i (1+|x|) \quad \textrm{and} \quad |G(t,x) - G(t,y)| \le M|x-y|. \label{eqn_029} 
\end{eqnarray*}
Therefore, by Theorem \ref{theo_fluid}, we have equation (\ref{eqn_026}), and by the definition of $g_i(t,x)$'s, we have equation (\ref{eqn_027}).
\end{proof}
\end{theorem}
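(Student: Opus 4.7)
The plan is to separate the theorem into its two assertions and handle each by reducing to results already in hand. The first assertion, namely $Z_n(t) \to \bar{Z}(t)$ a.s.\ with $\bar{Z}(t)$ satisfying (\ref{eqn_026}), is essentially a direct application of Kurtz's fluid limit, Theorem \ref{theo_fluid}, to the sequence $\{Z_n(t)\}$ defined in (\ref{eqn_adjseq}). The only thing to check before invoking that theorem is that the coefficients $g_i$ enjoy the same structural properties that $f_i$ does: namely at-most-linear growth for each $g_i$, and a global Lipschitz estimate on the aggregate drift $G(t,x) = \sum_i l_i g_i(t,x)$. These transfers are exactly the content of Lemmas \ref{lem_condition1} and \ref{lem_condition2}, so this step is immediate. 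Since $Z_n(0) = x_0$ for every $n$ by construction, the initial condition in (\ref{eqn_026}) is $\bar{Z}(0) = x_0$, matching the right-hand side.

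For the second assertion, $\bar{Z}(t) = E[X(t)]$, my strategy is to show that the mean $\mu(t) := E[X(t)]$ satisfies \emph{the same} integral equation as $\bar{Z}(t)$ and then conclude by uniqueness. By Theorem \ref{theo_exp},
\begin{eqnarray*}
\mu(t) = x_0 + \sum_{i=1}^k l_i \int_0^t E\big[f_i(s,X(s))\big] ds.
\end{eqnarray*}
By Proposition \ref{prop_exp}, each integrand equals $g_i(s,\mu(s))$, so $\mu(\cdot)$ satisfies
\begin{eqnarray*}
\mu(t) = x_0 + \sum_{i=1}^k l_i \int_0^t g_i(s,\mu(s)) ds,
\end{eqnarray*}
which is precisely equation (\ref{eqn_026}) with $\bar{Z}$ replaced by $\mu$. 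Because $G$ is globally Lipschitz in its second argument on $[0,T]$ (by Lemma \ref{lem_condition2}), the standard Picard/Gronwall argument for integral equations yields uniqueness of solutions to (\ref{eqn_026}) on $[0,T]$ with initial value $x_0$. Therefore $\bar{Z}(t) = \mu(t) = E[X(t)]$, which is (\ref{eqn_027}).

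The main conceptual subtlety, which I expect to be the only point requiring care, is the consistency of notation in the two integral equations: the $g_i$ that appears in the defining equation for $\bar{Z}$ (with the argument $\bar{Z}(s)$) must be the same function as the $g_i$ delivered by Proposition \ref{prop_exp} (with the argument $\mu(s)$). This is precisely what Proposition \ref{prop_exp} guarantees: the identity $E[f_i(t,X(t))] = g_i(t,\mu(t))$ defines $g_i(t,\cdot)$ as a function of the mean, and we use that single function in both equations. Once this is observed, the proof is just Kurtz's theorem plus uniqueness; none of the calculations are long.
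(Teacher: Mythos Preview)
Your proposal is correct and follows essentially the same route as the paper: transfer the growth and Lipschitz hypotheses from the $f_i$ to the $g_i$ via Lemmas~\ref{lem_condition1} and~\ref{lem_condition2}, apply Theorem~\ref{theo_fluid} to obtain (\ref{eqn_026}), and then identify $\bar{Z}$ with $E[X(t)]$ through the defining relation $g_i(t,\mu(t))=E[f_i(t,X(t))]$. The only difference is that you spell out the uniqueness argument (Lipschitz $G$ plus Gronwall) to conclude $\bar{Z}=\mu$, whereas the paper compresses this into the phrase ``by the definition of $g_i(t,x)$'s''; your version is the more careful of the two.
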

Comparing equation (\ref{eqn_027}) with equation (\ref{eqn_013}) in Theorem \ref{theo_exp}, we notice that Theorem \ref{theo_modfluid} via equation (\ref{eqn_027}) could provide the exact estimation of $E\big[X(t)\big]$. Though Theorem \ref{theo_modfluid} provides the exact estimation of $E\big[X(t)\big]$, the functions $g_i$'s cannot be identified unless the base distribution is known, which forces us to develop an algorithm to find $g_i$'s. Nonetheless, when applying our adjusted fluid model to the multi-server queues with abandonments and retrials, we, in fact, have a good candidate distribution to obtain $g_i$'s. So, the following section will describe our methodology to obtain $g_i$'s and to adjust the diffusion model also.  
\section{Adjusted diffusion model with Gaussian density} \label{sec_adjusteddiffusion}
In general, there is no clear way to find the exact base distribution of $X(t)$. However, we could characterize the asymptotic distribution for the multi-server queues from the literature. Many research studies on multi-server queues have shown that the limit processes of the multi-server queues are Gaussian processes, and the empirical density functions of them are also close to the Gaussian density. Listing some of those, for the time-homogeneous multi-server queues, \citet{Iglehart65} and \citet{Whitt:1982p1884} show weak convergence to the Ornstein-Uhlenbeck (OU) process, and \citet{Halfin81} proves weak convergence to Brownian motion and the OU process depending on the traffic. Therefore, for a given $t$, \emph{weak convergence provides the Gaussian distribution which is asymptotically true}. For the time-varying multi-server queues (with abandonments and retrials), as depicted in Figure \ref{fig_emp}, \citet{mandelbaum98} and \citet{Mandelbaum:2002p995} show that the empirical density is close to the Gaussian density. Furthermore, \emph{the result in \citet{Mandelbaum:2002p995} implies the limit process is a Gaussian process if the fluid model hits the critically loaded time points for a countable number of times, which is true for our model}. Therefore, for our model, it is reasonable to utilize the Gaussian distribution as a base distribution to identify $g_i$'s since the Gaussian assumption is asymptotically true.
\begin{figure}[htbp]
  \begin{center}
      \subfigure[\citet{mandelbaum98}]{
		\includegraphics[width = .45\textwidth]{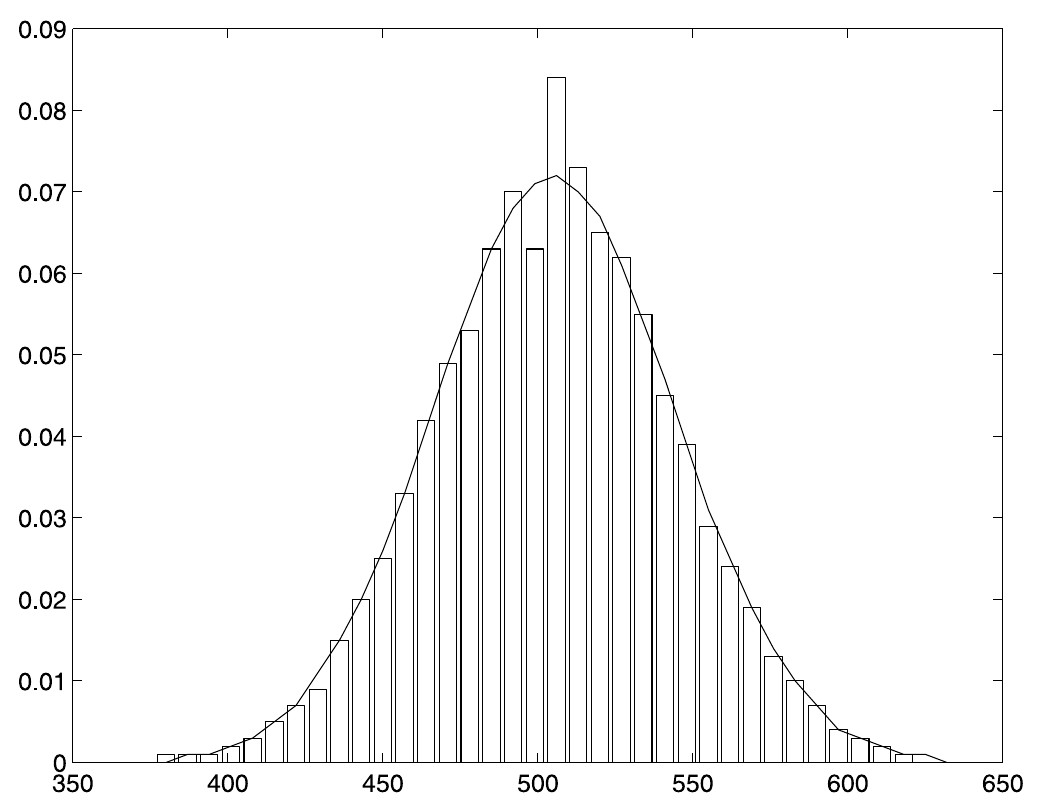}}
      \subfigure[\citet{Mandelbaum:2002p995}]{
		\includegraphics[width = .45\textwidth]{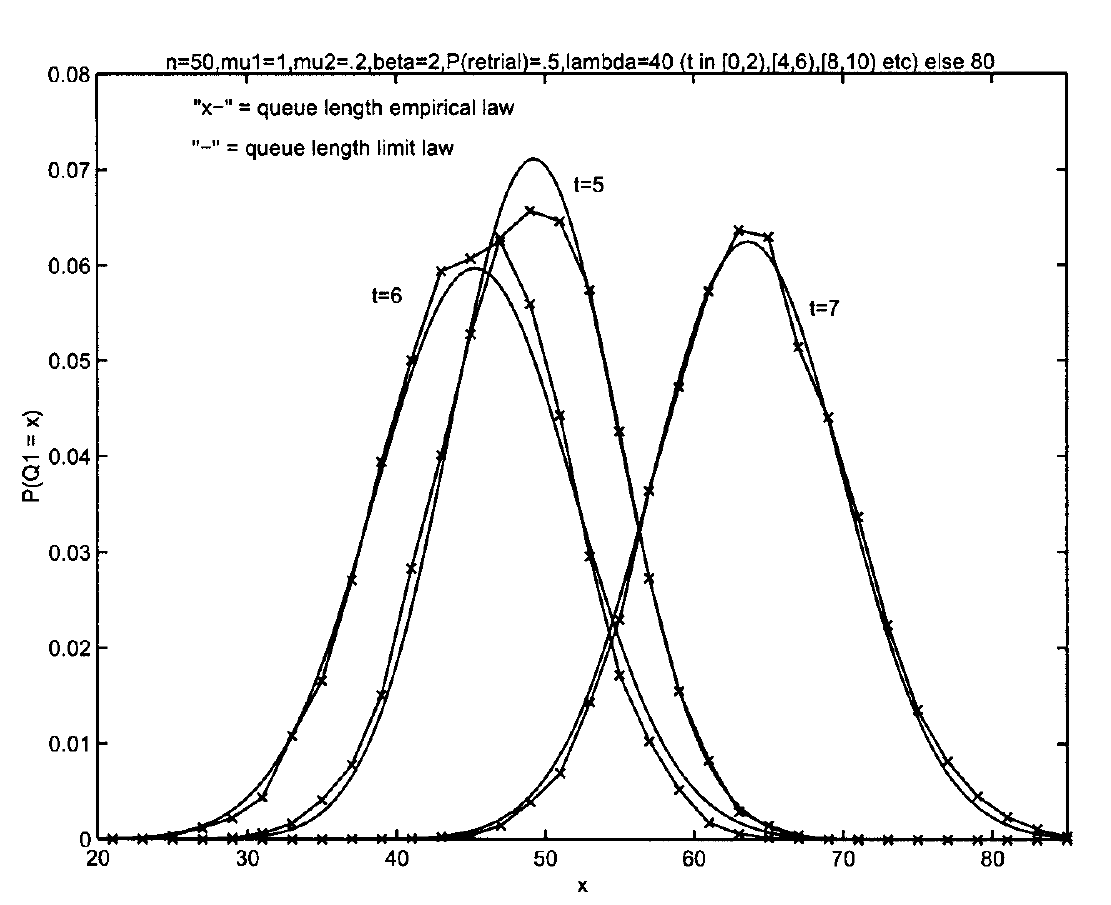}}
    \caption{Empirical density vs Gaussian density}
    \label{fig_emp}
  \end{center}
\end{figure}
Once we decide to use the Gaussian density, it provides following two additional benefits:
\begin{enumerate}
	\item The Gaussian distribution can be completely characterized by the mean and covariance matrix which can be obtained from the fluid and diffusion models.
	\item By using Gaussian density, $g_i$'s can achieve smoothness even if $f_i$'s are not smooth, which enables us to apply Theorem \ref{theo_diffusion} without additional assumptions.
\end{enumerate}
The second benefit is not obvious and hence we provide a proof of that.
\begin{lemma} \label{lem_smooth}
Let $g_i$'s be the rate functions of $Z(t)$ obtained from the Gaussian density. Then, $g_i$'s are differentiable everywhere.
\begin{proof}
Define
\begin{eqnarray}
	\phi(x,y) = \frac{1}{(2\pi)^{n/2}|\Sigma|^{1/2}}\exp \bigg(-\frac{(y-x)'\Sigma^{-1}(y-x)}{2}\bigg).\nonumber 
\end{eqnarray}
Using Gaussian density,
\begin{eqnarray*}
	g_i(t,x) = \int_{\mathbf{R}^d} f_i(t,y) \phi(x,y) dy.
\end{eqnarray*}
For $j \in \{1,\ldots, d\}$, since $\phi(x,y)$ is differentiable with respect to $x_j$ and $|f_i(t,y) \frac{d}{dx_j} \phi(x,y)|$ is integrable,
\begin{eqnarray}
	\frac{d}{dx_j}g_i(t,x) &=& \frac{d}{dx_j}\int_{\mathbf{R}^d} f_i(t,y) \phi(x,y) dy \nonumber\\
		&=& \int_{\mathbf{R}^d} f_i(t,y) \frac{d}{dx_j} \phi(x,y) dy \quad \textrm{by applying Theorem 2.27 in \citet{folland99}}, \label{eqn_folland}
\end{eqnarray}
where $x_j$ is $j^{\textrm{th}}$ component of $x$. Therefore, $g_i$ is differentiable with respect to $x_j$. \\
\end{proof}
\end{lemma}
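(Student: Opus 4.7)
The plan is to express $g_i(t,x)$ as an integral of $f_i$ against a Gaussian kernel and then move the derivative inside the integral. Concretely, with $\Sigma$ the covariance supplied by the (adjusted) diffusion model, Proposition \ref{prop_exp} lets me write
\begin{equation*}
g_i(t,x) \;=\; \int_{\mathbf{R}^d} f_i(t,y)\,\phi(x,y)\,dy,
\end{equation*}
where $\phi(x,y) = (2\pi)^{-d/2}|\Sigma|^{-1/2}\exp\!\bigl(-\tfrac{1}{2}(y-x)'\Sigma^{-1}(y-x)\bigr)$. The structural point is that all of the possible non-smoothness lives in $f_i$, while $\phi(x,y)$ is $C^\infty$ in $x$ with derivatives $\partial_{x_j}\phi(x,y) = \phi(x,y)\cdot\bigl(\Sigma^{-1}(y-x)\bigr)_j$. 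So smoothness of $g_i$ in $x$ should be inherited entirely from $\phi$, via a differentiation-under-the-integral argument.

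The intended tool is the Leibniz rule in the form of Theorem 2.27 in \citet{folland99}. To apply it coordinatewise I would verify: (i) for each $x$, the map $y\mapsto f_i(t,y)\phi(x,y)$ is integrable; (ii) $\partial_{x_j}\phi(x,y)$ exists pointwise with the formula above; and (iii) there is an $x$-independent integrable dominating function for $|f_i(t,y)\partial_{x_j}\phi(x,y)|$ on a neighborhood of any given $x_0$. Items (i) and (ii) are immediate: the linear-growth bound $|f_i(t,y)|\le C_i(1+|y|)$ from (\ref{eqn_024}) combined with Gaussian tails gives integrability, and the derivative formula is a direct computation.

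The real work is (iii), and this is where I expect the only subtlety. The plan is to strip the $x$-dependence out of $\phi$ by comparison with a Gaussian in $y$ alone. Using $(y-x)'\Sigma^{-1}(y-x) \ge \lambda_{\min}(\Sigma^{-1})\,|y-x|^2$ together with the elementary inequality $|y-x|^2 \ge \tfrac{1}{2}|y|^2 - |x|^2$, one obtains, on any compact neighborhood $U$ of $x_0$,
\begin{equation*}
\bigl|f_i(t,y)\,\partial_{x_j}\phi(x,y)\bigr| \;\le\; K\,(1+|y|^2)\,e^{-c|y|^2} \qquad \text{for all } x\in U,
\end{equation*}
for constants $K,c>0$ depending only on $\Sigma$, $C_i$ and $U$. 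The right-hand side is integrable, so Folland's theorem delivers the identity displayed in (\ref{eqn_folland}) and in particular the existence of $\partial_{x_j} g_i$ at $x_0$.

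Since $x_0$ and $j$ were arbitrary, $g_i$ is once differentiable everywhere; and because every higher derivative of $\phi(x,y)$ in $x$ is of the form $\phi(x,y)$ times a polynomial in $y-x$, the very same Gaussian-times-polynomial domination iterates, yielding that $g_i$ is in fact $C^\infty$ in $x$. The main obstacle is thus exclusively the domination step; everything else is routine application of dominated convergence.
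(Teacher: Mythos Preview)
Your proposal is correct and follows essentially the same route as the paper: write $g_i$ as the convolution of $f_i$ with a Gaussian kernel and invoke Theorem~2.27 of \citet{folland99} to differentiate under the integral. The only difference is that you actually supply the local $x$-uniform dominating function that Folland's theorem requires, whereas the paper simply asserts integrability of $|f_i(t,y)\,\partial_{x_j}\phi(x,y)|$; your version is thus a strictly more complete execution of the same argument.
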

Now, we have $g_i(\cdot,\cdot)$'s which are differentiable. Then, we can apply Theorem \ref{theo_diffusion} to obtain the diffusion model for $Z_n(t)$. Note that similar to the adjusted fluid model, once we have the distribution of $X(t)$, the adjusted diffusion model is applicable to more general cases. Therefore, we first follow the notation in Section \ref{subsec_strong} and will come back to our multi-server queues with abandonments and retrials.
\begin{proposition}[Adjusted diffusion model] \label{prop_moddiffusion} Let $g_i(\cdot,\cdot)$'s be the rate functions in $Z(t)$ obtained from Gaussian density. Define a sequence of scaled centered processes $\{V_n(t)\}$ for $t \le T$ to be
\begin{eqnarray*}
	V_n(t) = \sqrt{n}\big(Z_n(t)-\bar{Z}(t)\big), \label{eqn_030}
\end{eqnarray*}
where $Z_n(t)$ and $\bar{Z}(t)$ are solutions to equations (\ref{eqn_adjseq}) and (\ref{eqn_026}) respectively. If $f_i(t,x)$'s and $F(t, x)$ satisfy equations (\ref{eqn_024}) and (\ref{eqn_025}) respectively, then
$\lim_{n \rightarrow \infty} V_n(t) = V(t)$, where
\begin{eqnarray*}
	V(t) = \sum_{i=1}^{k} l_i \int_{0}^{t} \sqrt{g_i\big(s,\bar{Z}(s)\big)}dW_i(s) + \int_{0}^{t} \partial G\big(s,\bar{Z}(s)\big) ds, \label{eqn_031}
\end{eqnarray*}
$W_i(\cdot)$'s are independent standard Brownian motions, and $\partial G\big(t,\bar{Z}(t)\big)$ is the gradient matrix of $G\big(t,\bar{Z}(t)\big)$ with respect to $\bar{Z}(t)$. Furthermore, $V(t)$ is a Gaussian process.
\begin{proof}
From definition of $G(t,x)$ in (\ref{eqn_G}), we can easily verify that $G(t,x)$ is differentiable by Lemma \ref{lem_smooth} and hence $|G(t,x) - G(t,y)| \le M|x-y|$ implies
\begin{eqnarray*}
	\bigg|\frac{\partial}{\partial x_i} G(t,x) \bigg| \le M_i \quad \textrm{for some } M_i < \infty, t \le T, \textrm{ and } i\in \{1,\ldots, d\}.
\end{eqnarray*}
Therefore, by Theorem \ref{theo_diffusion}, we prove this proposition.
\end{proof}
\end{proposition}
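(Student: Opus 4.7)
The plan is to reduce Proposition~\ref{prop_moddiffusion} to Theorem~\ref{theo_diffusion} (Kurtz's diffusion theorem) applied to the sequence $\{Z_n(t)\}$ built from the Gaussian-smoothed rates $g_i$. Theorem~\ref{theo_diffusion} demands two ingredients: Lipschitz continuity of each $g_i$ in $x$, and boundedness of the partial derivatives of $G$. Once both are in place, the convergence $V_n(t)\to V(t)$ and the integral representation of $V(t)$ follow immediately, and the Gaussianity of $V(t)$ is a consequence of Remark~\ref{rem_gaussian}.

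First, I would invoke Lemma~\ref{lem_condition2} to lift the Lipschitz hypothesis on the $f_i$ in (\ref{eqn_025}) to the bound $|g_i(t,x)-g_i(t,y)|\le M|x-y|$, which handles the first hypothesis of Theorem~\ref{theo_diffusion} with no additional work. Second, I would establish the bounded-gradient hypothesis on $G$. Lemma~\ref{lem_smooth} already guarantees that each $g_i$ is differentiable in $x$ (the Gaussian convolution does all the smoothing), so $G=\sum_i l_i g_i$ is differentiable. The aggregate Lipschitz bound on $G$ supplied by Lemma~\ref{lem_condition2} then forces $|\partial G/\partial x_j|\le M_j<\infty$ for each coordinate $j$ via the standard mean-value argument along coordinate directions: for a unit coordinate vector $e_j$, the difference quotient $|G(t,x+he_j)-G(t,x)|/|h|$ is bounded by $M$, and passing $h\to 0$ yields the desired bound.

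With both hypotheses satisfied, Theorem~\ref{theo_diffusion} applied to $\{Z_n(t)\}$ produces the limit $V(t)$ with the claimed integral representation. Finally, $V(t)$ solves a linear stochastic differential equation driven by independent Brownian motions with deterministic (time-dependent) coefficients, so Remark~\ref{rem_gaussian}, together with the deterministic initial condition $V(0)=0$, yields that $V(t)$ is a Gaussian process.

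I expect the bounded-gradient step to be the only place requiring care, but even there the combination of smoothness from Lemma~\ref{lem_smooth} and the Lipschitz bound from Lemma~\ref{lem_condition2} essentially makes it automatic. All of the genuine analytic work resides in the preceding lemmas; the proof itself is a bookkeeping exercise of verifying Theorem~\ref{theo_diffusion}'s hypotheses and quoting the Gaussianity remark.
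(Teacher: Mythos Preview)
Your proposal is correct and follows essentially the same route as the paper: verify the hypotheses of Theorem~\ref{theo_diffusion} for the $g_i$'s by combining the differentiability from Lemma~\ref{lem_smooth} with the Lipschitz bounds from Lemma~\ref{lem_condition2}, then invoke the theorem. If anything, you are slightly more explicit than the paper in separately checking the Lipschitz condition on each $g_i$ and in citing Remark~\ref{rem_gaussian} for the Gaussianity of $V(t)$.
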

\begin{corollary} \label{cor_samedistribution}
If $f_i$'s are constants or linear combinations of the components of $X(t)$. Then,
\begin{eqnarray*}
	X(t) = Z(t) \quad \textrm{in distribution}. \label{eqn_032}
\end{eqnarray*}
\begin{proof}
Using the linearity of expectation, we can verify $g_i(t,x)= f_i(t,x)$ for $i\in\{1,\ldots,k\}$.
\end{proof}
\end{corollary}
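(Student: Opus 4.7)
The plan is to show that under the linearity hypothesis the rate functions $g_i$ constructed via Proposition \ref{prop_exp} coincide identically with the original $f_i$, so that the integral equations defining $X(t)$ and $Z(t)$ become the same, and the distributional equality then follows from uniqueness of solutions.

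First, I would write any $f_i$ satisfying the hypothesis in the affine form $f_i(t,x) = a_i(t) + b_i(t)^\top x$ (constants being the case $b_i \equiv 0$). By linearity of expectation, for \emph{any} $\mathbb{R}^d$-valued random vector $U$ with finite mean,
\begin{eqnarray*}
  E\bigl[f_i(t, U)\bigr] \;=\; a_i(t) + b_i(t)^\top E[U] \;=\; f_i\bigl(t, E[U]\bigr).
\end{eqnarray*}
Specializing to $U = X(t)$ and setting $\mu(t) = E[X(t)]$, the defining identity $g_i(t,\mu(t)) = E[f_i(t,X(t))]$ from Proposition \ref{prop_exp} yields $g_i(t,\mu(t)) = f_i(t,\mu(t))$.

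Second, I would promote this pointwise equality to the functional identity $g_i(t,\cdot) \equiv f_i(t,\cdot)$. Recall from the proof of Proposition \ref{prop_exp} that $g_i(t,\cdot)$ is built by shifting the base distribution $F_0 \in \mathbb{F}_0$ to have an arbitrary mean $\mu$, i.e.\ $g_i(t,\mu) = \int f_i(t, y+\mu)\,dF_0(y)$. Since $f_i$ is affine, this integral equals $f_i(t,\mu)$ independently of the choice of $F_0$; thus $g_i(t,x) = f_i(t,x)$ for every $x \in \mathbb{R}^d$, which is the content of the one-line proof sketch. Substituting this equality into equation (\ref{eqn_014}) (with the natural initialization $z_0 = x_0$) shows that $Z(t)$ solves the same integral equation (\ref{eqn_001}) as $X(t)$, driven by independent rate-$1$ Poisson processes.

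Finally, because the Lipschitz hypothesis (\ref{eqn_025}) guarantees a unique (in law) solution to such a Poisson-driven integral equation on $[0,T]$, we conclude $X(t) \stackrel{d}{=} Z(t)$. There is no serious obstacle here; the only point requiring care is the second step, namely being explicit that $g_i$ is defined as a genuine function on $\mathbb{R}^d$ (obtained by varying the mean of the base distribution) and not merely as a value at the particular point $\mu(t)$, so that the affine identity $E[f_i(t,\cdot+\mu)] = f_i(t,\mu)$ really does force $g_i \equiv f_i$ as functions.
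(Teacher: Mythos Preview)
Your proposal is correct and follows exactly the same approach as the paper: show via linearity of expectation that $g_i(t,x)=f_i(t,x)$ for all $i$, whence $Z(t)$ and $X(t)$ satisfy the same Poisson-driven integral equation. You have simply expanded the paper's one-line argument by making explicit the affine form of $f_i$, the functional (not just pointwise) identity $g_i\equiv f_i$, and the appeal to uniqueness in law of solutions, all of which the paper leaves implicit.
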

Finally, we have the adjusted fluid and diffusion models by utilizing Gaussian density. Therefore, instead of assuming measure zero at a set of non-differentiable points (as done in \citet{Mandelbaum:2002p995}), we compare the adjusted models with the empirical mean and covariance matrix. Note when we explain Theorem \ref{theo_modfluid}, we do not consider $\Sigma(t)$, the covariance matrix of $X(t)$. However, from Gaussian density, we know that $\Sigma(t)$ characterizes the base distribution and it can be obtained from Proposition \ref{prop_moddiffusion}. Therefore, we rewrite $g_i$'s to be functions of $t$, $\bar{Z}(t)$, and $\Sigma(t)$; i.e.
\begin{eqnarray}
	g_{i}\big(t, \bar{Z}(t)\big) &\rightarrow& g_{i}\big(t, \bar{Z}(t), \Sigma(t) \big) \quad \textrm{for } i\in\{1,\ldots,k\} \textrm{ and}  \label{eqn_036}\\
	G\big(t, \bar{Z}(t)\big) &\rightarrow& G\big(t, \bar{Z}(t), \Sigma(t) \big). \label{eqn_037}
\end{eqnarray}
\begin{proposition}[Mean and covariance matrix] \label{prop_modmoment}
Let $Y(t) = \bar{Z}(t) + V(t)$. Then,
\begin{eqnarray}
	E\big(Y(t)\big) &=& \bar{Z}(t) \quad \textrm{and} \label{eqn_038} \\
	Cov\big(Y(t),Y(t)\big) &=& Cov\big(V(t),V(t)\big) = \Sigma(t). \label{eqn_039}
\end{eqnarray}
The quantities $\bar{Z}(t)$ and $\Sigma(t)$ are obtained by solving the following simultaneous ordinary differential equations with initial values given by $\bar{Z}(0) = x_0$ and $\Sigma(0)=0$:
\begin{eqnarray}
	\frac{d}{dt}\bar{Z}(t) &=& \sum_{i=1}^{k} l_i g_{i}\big(t, \bar{Z}(t), \Sigma(t) \big), \label{eqn_040} \\
	\frac{d}{dt}\Sigma(t) &=& A(t) \Sigma(t) + \Sigma(t) A(t)' + B(t)B(t)', \label{eqn_041}
\end{eqnarray}
where $A(t)$ is the gradient matrix of $G\big(t,\bar{Z}(t), \Sigma(t)\big)$ with respect to $\bar{Z}(t)$, and $B(t)$ is the $d \times k$ matrix such that its $i^\textrm{th}$ column is $l_i \sqrt{g_i\big(t,\bar{Z}(t), \Sigma(t)\big)}$.
\begin{proof}
Since $V(0) = 0$, from Corollary \ref{cor_moment}, we have (\ref{eqn_038}) and (\ref{eqn_039}). By rewriting (\ref{eqn_026}) in Theorem \ref{theo_modfluid} as a differential equation form, we have (\ref{eqn_040}), and by Theorem \ref{theo_moment}, we have (\ref{eqn_041}). Note that since both $\bar{Z}(t)$ and $\Sigma(t)$ are variables, we should solve (\ref{eqn_040}) and (\ref{eqn_041}) simultaneously.
\end{proof}
\end{proposition}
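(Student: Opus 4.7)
The plan is to derive the two equalities (\ref{eqn_038})--(\ref{eqn_039}) first, then read off the two coupled ODEs (\ref{eqn_040})--(\ref{eqn_041}) directly from previously established results, treating $V(t)$ as a linear SDE around the deterministic trajectory $\bar{Z}(t)$. Because $\bar{Z}(t)$ is deterministic by Theorem \ref{theo_modfluid}, I immediately get $E[Y(t)] = \bar{Z}(t) + E[V(t)]$ and $Cov[Y(t),Y(t)] = Cov[V(t),V(t)]$, so the whole proof reduces to: (i) showing $E[V(t)] = 0$, (ii) writing the drift/diffusion of $V(t)$ in the standard linear form required by Theorem \ref{theo_moment}, and (iii) noting the coupling that arises once $g_i$ is viewed as a function of $(t,\bar{Z}(t),\Sigma(t))$ through the substitutions (\ref{eqn_036})--(\ref{eqn_037}).

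First I would rewrite the integral expression for $V(t)$ from Proposition \ref{prop_moddiffusion} as the linear stochastic differential equation
\[ dV(t) = \partial G\bigl(t,\bar{Z}(t)\bigr)\,V(t)\,dt + B(t)\,dW(t), \qquad V(0)=0, \]
where $W(t) = (W_1(t),\ldots,W_k(t))'$ is a $k$-dimensional standard Brownian motion and $B(t)$ is the $d\times k$ matrix whose $i$th column is $l_i\sqrt{g_i(t,\bar{Z}(t))}$, so that $B(t)B(t)' = \sum_{i=1}^k g_i(t,\bar{Z}(t))\,l_i l_i'$. This is exactly the linear system in Theorem \ref{theo_moment} with $A(t) = \partial G(t,\bar{Z}(t))$ and initial condition $0$; Lemma \ref{lem_smooth} together with the Lipschitz bound (\ref{eqn_025}) guarantees $A(t)$ is bounded, so the hypotheses of Theorem \ref{theo_moment} are met. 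Applying Corollary \ref{cor_moment} with $M(0)=0$ gives $E[V(t)]=0$, which establishes (\ref{eqn_038}), and then $Cov[Y(t),Y(t)]=Cov[V(t),V(t)]$ yields (\ref{eqn_039}) by definition of $\Sigma(t)$.

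For the ODEs, (\ref{eqn_040}) is obtained by rewriting the integral equation (\ref{eqn_026}) from Theorem \ref{theo_modfluid} in differential form, and (\ref{eqn_041}) is exactly the covariance ODE (\ref{eqn_010}) of Theorem \ref{theo_moment} applied to the linear SDE for $V(t)$ above, with $A(t)$ and $B(t)$ as identified. The only subtlety worth emphasizing in the write-up is the coupling: once we invoke the Gaussian base-distribution convention and let $g_i$ depend on the running covariance through (\ref{eqn_036})--(\ref{eqn_037}), the drift of the fluid equation (\ref{eqn_040}) depends on $\Sigma(t)$ and the matrices $A(t),B(t)$ driving (\ref{eqn_041}) depend on $\bar{Z}(t)$, so neither ODE can be solved in isolation.

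The main obstacle is conceptual rather than technical: one must justify that the Gaussian-based $g_i(t,\bar{Z}(t),\Sigma(t))$ inherits the boundedness and Lipschitz properties needed to plug into Theorem \ref{theo_moment} at every $t$, even though $\Sigma(t)$ is itself being generated by the same ODE. This follows from Lemmas \ref{lem_condition1}--\ref{lem_condition2}, which give these properties pointwise in $\Sigma$, combined with the smoothness established in Lemma \ref{lem_smooth}; a brief remark that local existence and uniqueness of the coupled system on $[0,T]$ follows by a standard Picard argument would round out the proof.
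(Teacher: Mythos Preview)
Your proposal is correct and follows essentially the same approach as the paper's proof: invoke $V(0)=0$ together with Corollary \ref{cor_moment} to obtain (\ref{eqn_038})--(\ref{eqn_039}), differentiate (\ref{eqn_026}) to get (\ref{eqn_040}), apply Theorem \ref{theo_moment} to the linear SDE for $V(t)$ to get (\ref{eqn_041}), and remark on the coupling between the two ODEs. Your write-up simply spells out more of the intermediate steps (the explicit linear-SDE form of $V(t)$, the identification of $A(t)$ and $B(t)$, and the regularity/Picard remarks) than the paper's terse proof does.
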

Eventually, we now have the adjusted fluid and diffusion models for the general cases, and it is the time to return to our system as given in Section \ref{sec_problem}. Using Gaussian density, we can obtain the new rate functions, $g_i$'s, which correspond to $f_i$'s as follows.
\begin{eqnarray}
  g_1(t,x) &=& \lambda_t, \nonumber \\
  g_2(t,x) &=& \mu_t^2 x_2, \nonumber \\
  g_3(t,x) &=& \mu_t^1\big(n_t + (x_1-n_t)\Phi(n_t,x_1,\sigma_{1_t}) - \sigma_{1_t}^2 \phi(n_t,x_1,\sigma_{1_t})\big), \nonumber \\
  g_4(t,x) &=& \beta_t(1-p_t)\Big((x_1-n_t)\big(1-\Phi(n_t,x_1,\sigma_{1_t})\big)+\sigma_{1_t}^2\phi(n_t,x_1,\sigma_{1_t})\Big), \quad \textrm{and} \nonumber \\
  g_5(t,x) &=& \beta_tp_t\Big((x_1-n_t)\big(1-\Phi(n_t,x_1,\sigma_{1_t})\big)+\sigma_{1_t}^2\phi(n_t,x_1,\sigma_{1_t})\Big), \nonumber
\end{eqnarray}
where $\Phi(a,b,c)$ and $\phi(a,b,c)$ are function values at point $a$ of the Gaussian CDF and PDF respectively with mean $b$ and standard deviation $c$.\\
Since $f_1(t,x)$ and $f_2(t,x)$ are constant and linear with respect to $x$ respectively,  $g_1(t,x)=f_1(t,x)$ and $g_2(t,x)=f_2(t,x)$. The derivation of other $g_i(\cdot,\cdot)$'s is straightforward but requires some computational efforts and hence we provide the details in Appendix \ref{app_gi}. Note $g_3$, $g_4$, and $g_5$ include $\sigma_{1_t}$ which is currently treated as a function of $t$ but is used by the adjusted diffusion model (see equations (\ref{eqn_036}) and (\ref{eqn_037})). With the $g_i$'s above, by Proposition \ref{prop_modmoment}, we finally obtain $E[Z(t)]$ and $Cov[Z(t), Z(t)]$ for $t \le T$ and will use them to approximate the mean and covariance matrix of our original process $X(t)$ in equations (\ref{eqn_rx1}) and (\ref{eqn_rx2}).\\ 
Although we obtain the functions $g_i$'s for our adjusted models, we need some intuition regarding how $g_i$'s contribute to increasing accuracy especially in the critically loaded phases. Thus, in the next section, we revisit the inaccuracy in the previous approaches and explain how our adjusted models treat this.
\section{Discussion on function $g_i$'s} \label{sec_g}
In this section, we are going to investigate the functions $g_i$'s precisely. In order to get a clearer intuition, we consider a simple $M_t/M_t/n_t$ queue which is a special case of our original model ($\beta_t = 0$, and $\mu_t^1 = \mu_t$). Let $x(t)$ denote the number of customers in the system at time $t$. Then, $x(t)$ is the solution to the following integral equation:
\begin{eqnarray*}
x(t) = x(0) +Y_1\Big(\int_{0}^{t}\lambda_s ds\Big) + - Y_2\Big(\int_{0}^{t}\big(x(s)\wedge n_s\big)\mu_sds\Big). \label{eqn_simp_ori}
\end{eqnarray*}
Here, for convenience, define $f_1(t,x) = \lambda_t$, $f_2(t,x)= \big(x\wedge n_t\big)\mu_t$, and $F(t,x) = \lambda_t -  \big(x\wedge n_t\big)\mu_t$. 
Applying theorems in Section \ref{subsec_strong}, we have the fluid model $\bar{x}(t)$ and diffusion model $u(t)$ from the following integral equations:
\begin{eqnarray*}
\bar{x}(t) &=& x(0) +\int_{0}^{t}\lambda_s  - \big(\bar{x}(s)\wedge n_s\big)\mu_s ds, \textrm{ and} \\
u(t) &=& u(0) + \int_{0}^{t} \Big(\sqrt{\lambda_s}, \sqrt{\big(\bar{x}(s)\wedge n_s\big)\mu_s}\Big)\binom{dW_1(t)}{dW_2(t)} ds + \int_{0}^{t} \partial F(s,\bar{x}(s)) ds,
\end{eqnarray*}
where
\begin{eqnarray*}
  \partial F(t,\bar{x}(t)) = \left \{ \begin{array}{ll}
     -\mu_t & \textrm{if } \bar{x}(t) \le n_t, \\
      0 & \textrm{otherwise.}
      \end{array} \right . 
\end{eqnarray*}
Notice that the drift part $\partial F(t,\bar{x}(t))$ of the diffusion model is completely determined by the fluid model and here we might encounter a serious problem. Suppose we observe several realizations of this multi-server queue. When the $\bar{x}(t)$ is much smaller than the number of server $n_t$ (underloaded phase), then there is not great possibility that an observed process is overloaded or critically loaded. Therefore, the drift part $-\mu_t$ is valid in that sense. Now, assume that $\bar{x}(t)$ is smaller than but fairly close to $n_t$. Then, it is likely that significant fraction of the realizations could be overloaded or critically loaded. However, the drift part is still $-\mu_t$ since \emph{the possibility of being overloaded or critically loaded is completely ignored by the fluid model}. Furthermore, imagine $\bar{x}(t)$ now becomes slightly larger than $n_t$. Then, the drift part suddenly changes to zero. As a result, if $\bar{x}(t)$ is fluctuating close to $n_t$, i.e. \emph{lingering}, then the drift part of the diffusion model would repeat sudden changes between the values $-\mu_t$ and $0$. Undoubtedly, it produces sharp spikes in the diffusion model as shown in Figure \ref{fig_eg_annotated} and make the quality of the approximation worse especially near the critically loaded phase.\\
Now, we turn our attention to the functions $g_i$'s. In Section \ref{sec_adjustedfluid}, $g_i$'s in the adjusted fluid model would improve the accuracy in estimating the mean values of the system states. Then, one may ask a question how $g_i$'s affect the estimation accuracy of the covariance matrix. To answer the question, let us follow the procedure to obtain $g_2(t,\cdot)$. Note $g_1(t,\cdot) = f_1(t,\cdot)$.\\
Define $G(t,x) = g_1(t,x) - g_2(t,x) = \lambda_t - g_2(t,x)$. For a fixed $t_0$, let $x = x(t_0)$, $\mu = \mu_{t_0}$, $n=n_{t_0}$ and $ z = E[x(t_0)]$. Then, 
\begin{eqnarray}
  g_2(t_0, z) = E\big[\mu(x \wedge n)\big] = \mu\Big\{E[x\mathbb{I}_{x \le n}]+nPr[x > n]\Big\}. \label{eqn_actual}
\end{eqnarray}
From equation (\ref{eqn_actual}), we could notice the following characteristics of the function $g_2(\cdot, \cdot)$.
\begin{enumerate}
\item If $Pr[x > n] \rightarrow 1$, $g_2(t_0, z) \rightarrow \mu n$.
\item If $Pr[x > n] \rightarrow 0$, $g_2(t_0, z) \rightarrow \mu z$.
\end{enumerate}
Note that $\partial G(t,\bar{z}(t))$ changes smoothly over time between $-\mu$ and $0$ according to $Pr[x(t) > n_t]$ as $Pr[x(t) > n_t]$ changes smoothly under our Gaussian assumption (in fact, any distribution having a differentiable density works). Therefore, even if the adjusted fluid model $\bar{z}(t)$ is lingering in the vicinity of $n_t$, the drift part of the adjusted diffusion model changes smoothly over time. In the following section, we provide several experimental results and show the effectiveness of the adjusted models.
\section{Numerical results} \label{sec_numerical}
We compare our adjusted models against the fluid and diffusion models with the measure-zero assumption in \citet{Mandelbaum:2002p995} for multi-server queues with abandonments and retrials. Under the similar settings in \citet{Mandelbaum:2002p995}, we use 5,000 independent simulation runs and compare the simulation result with both methodologies. We use the constant rates for the parameters except the arrival rate. The arrival rate alternates between $45$ and $55$ every two time units. Figures \ref{fig_retrial_mean} and \ref{fig_retrial_covariance} show the estimation of mean values from one experiment. The number of servers ($n_t$) is $50$ and the service rate of each server is $1$.
\begin{figure}[htbp]
  \begin{center}
      \subfigure[ Mean numbers by assuming measure zero]{
		\includegraphics[width = .8\textwidth]{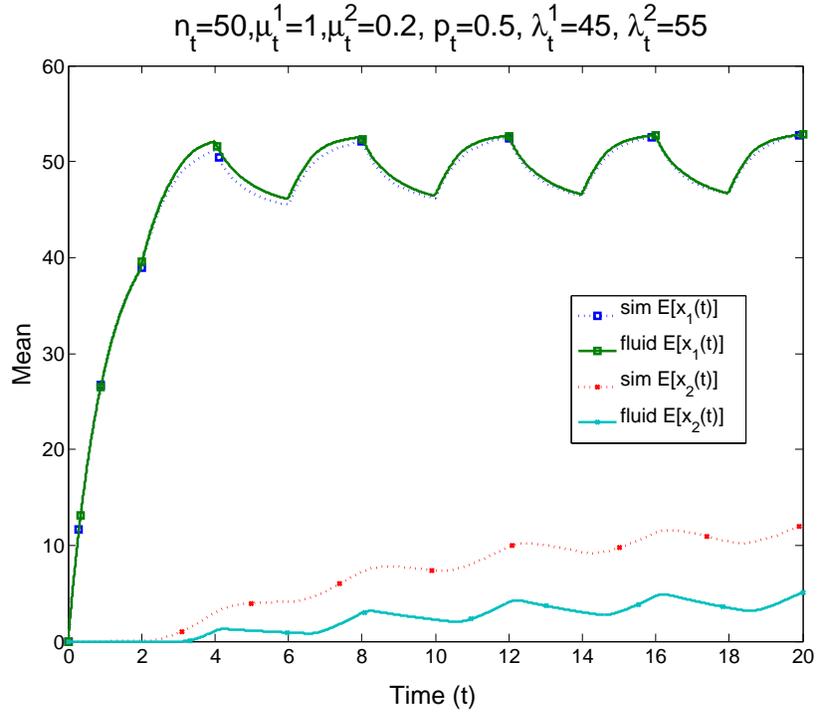}}
      \subfigure[ Mean numbers by our proposed method]{
		\includegraphics[width = .8\textwidth]{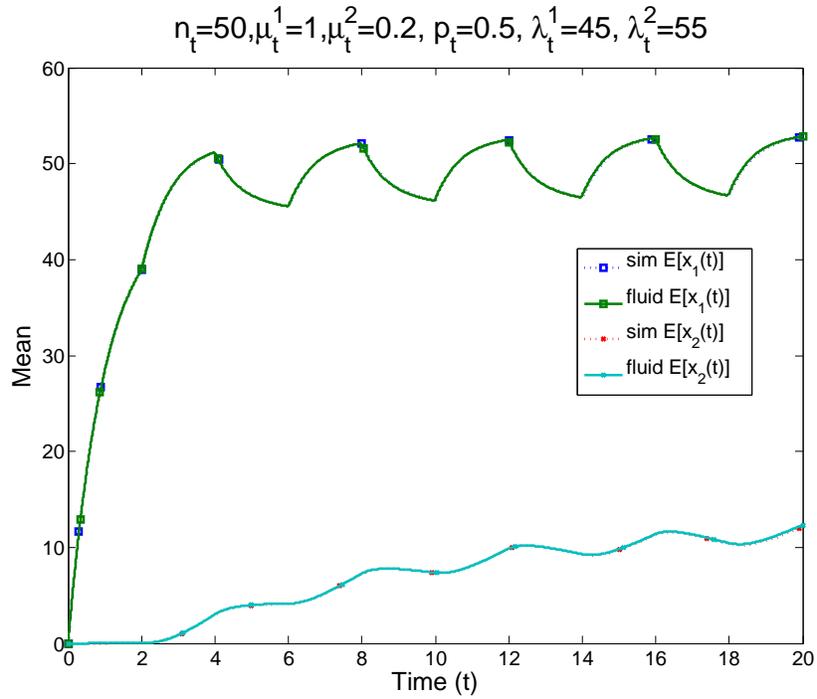}}
    \caption{Comparison of mean values, $E\big[X(t)\big]$}
    \label{fig_retrial_mean}
  \end{center}
\end{figure}
As seen in Figure \ref{fig_retrial_mean}, the number of customers in service node ($x_1(t)$) stays near the critically loaded point for a long time. As \citet{Mandelbaum:2002p995} points out, the fluid model with the measure-zero assumption shows significant estimation errors for $E\big[x_2(t)\big]$. On the other hand, our adjusted fluid model provides excellent approximation results. Especially, one can recognize remarkable improvement in the estimation of $E\big[x_2(t)\big]$. For the mean value of $x_1(t)$, our adjusted fluid model provides a lot better approximation result than the method with the measure-zero assumption.\\
\begin{figure}[htbp]
  \begin{center}
      \subfigure[ Covariance matrix by assuming measure zero]{
		\includegraphics[width = .8\textwidth]{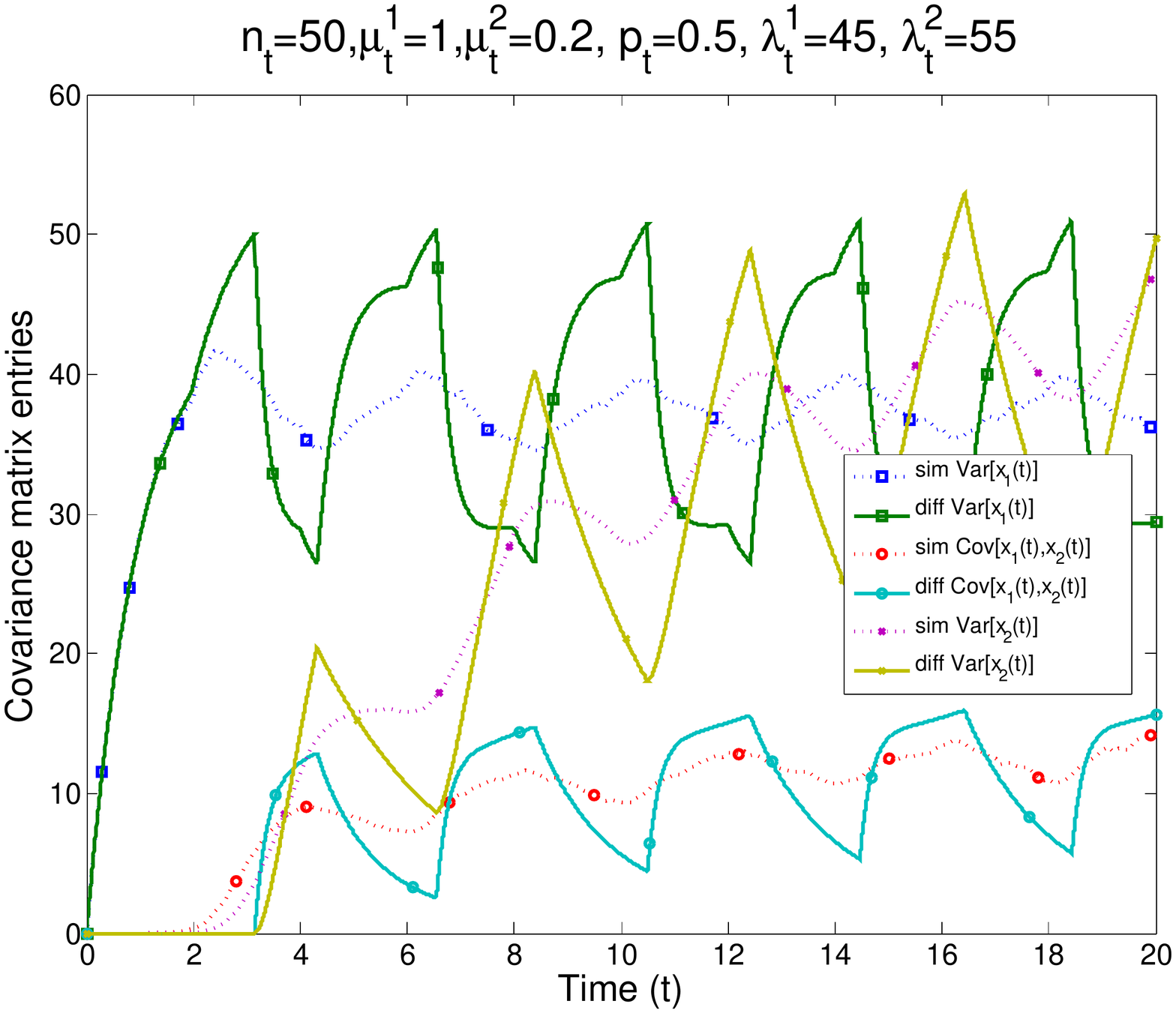}}
      \subfigure[ Covariance matrix by our proposed method]{
		\includegraphics[width = .8\textwidth]{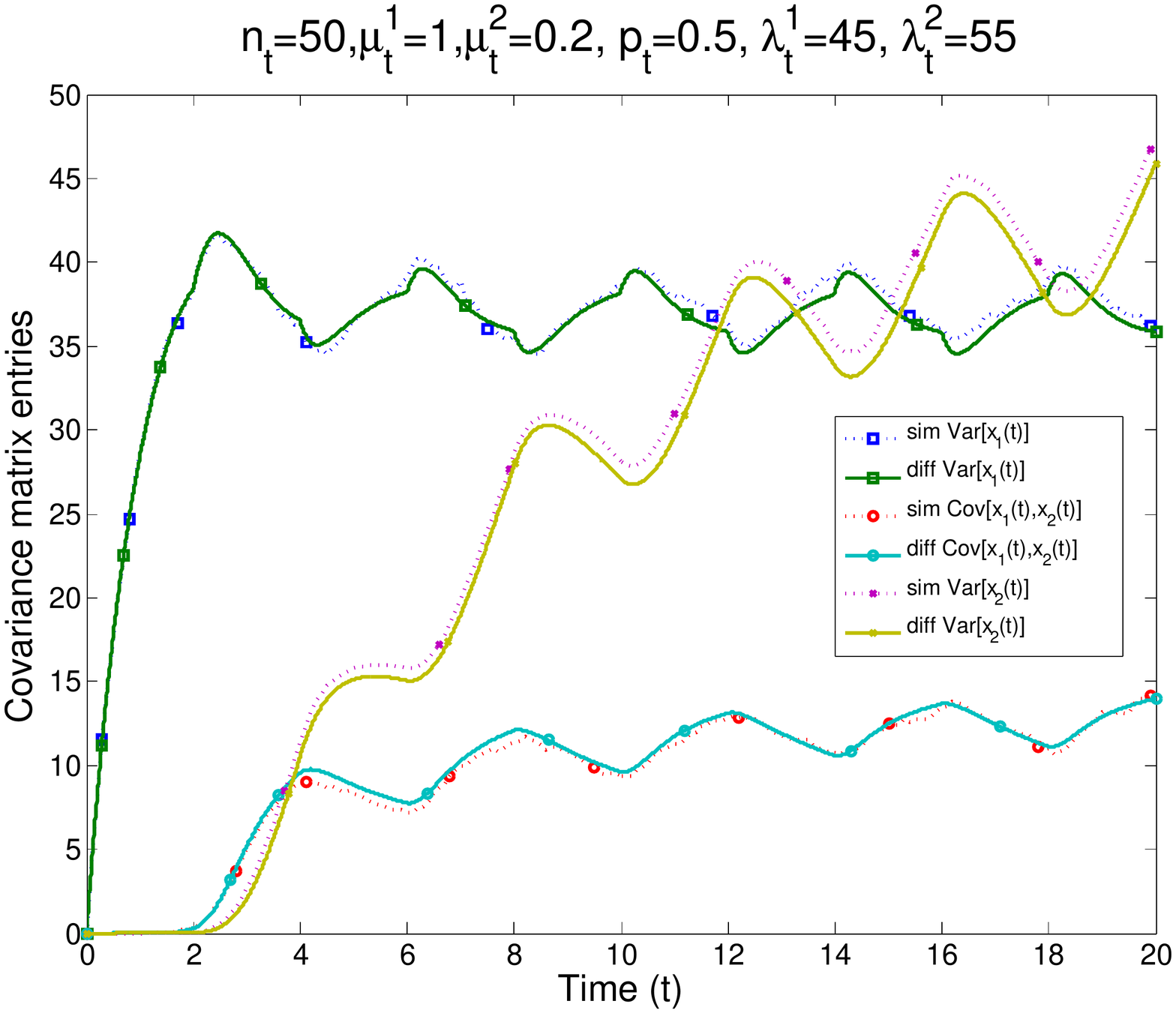}}
    \caption{Comparison of covariance matrix entries, $Cov\big[X(t),X(t)\big]$}
    \label{fig_retrial_covariance}
  \end{center}
\end{figure}
When we see the covariance matrix, we also notice our adjusted diffusion model shows dramatic improvement against the diffusion model with the measure-zero assumption.  As seen in Figure \ref{fig_retrial_covariance}, the diffusion model assuming measure zero causes ``spikes'' as also pointed out in Section \ref{subsec_inaccuracy}. Our proposed model, however, provides excellent accuracy without spikes at all.\\
Besides this specific example, in order to verify the effectiveness of our methodology, we conduct several experiments with different parameter combinations.
\begin{table}[htdp]
\caption{Experiments setting}
\begin{center}
\begin{tabular}{|c|c|c|c|c|c|c|c|c|c|c|}
\hline
exp \# & svrs & $\lambda_1$ & $\lambda_2$ & $\mu_1$ & $\mu_2$ & $\beta$ & $p$ & alter & time \\ \hline \hline
1 & 50 & 40 & 80 & 1 & 0.2 & 2.0 & 0.5 & 2 & 20 \\ \hline
2 & 50 & 40 & 60 & 1 & 0.2 & 2.0 & 0.5 & 2 & 20 \\ \hline
3 & 100 & 80 & 120 & 1 & 0.2 & 2.0 & 0.7 & 2 & 20 \\ \hline
4 & 100 & 90 & 110 & 1 & 0.2 & 2.0 & 0.7 & 2 & 20 \\ \hline
5 & 50 & 40 & 80 & 1 & 0.2 & 1.5 & 0.7 & 2 & 20 \\ \hline
6 & 50 & 40 & 60 & 1 & 0.2 & 1.5 & 0.7 & 2 & 20 \\ \hline
7 & 50 & 45 & 55 & 1 & 0.2 & 2.0 & 0.5 & 2 & 20 \\ \hline
8 & 100 & 95 & 105 & 1 & 0.2 & 2.0 & 0.5 & 2 & 20 \\ \hline
9 & 150 & 140 & 160 & 1 & 0.2 & 2.0 & 0.5 & 2 & 20 \\ \hline
10 & 150 & 100 & 190 & 1 & 0.2 & 2.0 & 0.5 & 2 & 20 \\
\hline
\end{tabular}
\end{center}
\label{tab_settings}
\end{table}
Table \ref{tab_settings} describes the setting of each experiment. In Table \ref{tab_settings}, ``svrs'' is the number of servers ($n_t$), ``alter'' is the time length for which each arrival rate lasts, and ``time'' is the end time of our analysis. We already recognize that the method assuming measure zero works well when it \emph{does not linger} too long near the non-differentiable points. For comparison, therefore, our experiments contain several cases where the system \emph{does linger} relatively long around those points as well as the cases where it does not. Experiments 1-4 are intended to see the effects of \emph{lingering} around the critically loaded points. We change $\beta_t=\beta$ and $p_t=p$ as well as the arrival rates in experiments 5-8 to see the effects of other parameters. In fact, from the other experiments not listed in Table \ref{tab_settings}, it turns out that changing other parameters does not affect estimation accuracy significantly. Experiments 9 and 10 are set to observe how larger arrival rates and number of servers affect the estimation accuracy along with the lingering effect by increasing both of them.\\
Here we explain the overall results: for the details of numerical results, see Table \ref{tab_mean_x1}-\ref{tab_var_x2} in Appendix \ref{app_table}. Similar to the results in Figures \ref{fig_retrial_mean} and \ref{fig_retrial_covariance}, we observe that lingering does debase the quality of approximations significantly when assuming measure zero. On the other hand, we see that our proposed models provide excellent accuracy for both mean and covariance matrix. Even if we increase both arrival rates and number of servers, we notice that lingering still affects the estimation accuracy significantly when assuming measure zero but it does not in our models.
\begin{figure}
\centering
      \subfigure[Average difference for all experiments]{
		\includegraphics[width = .8\textwidth]{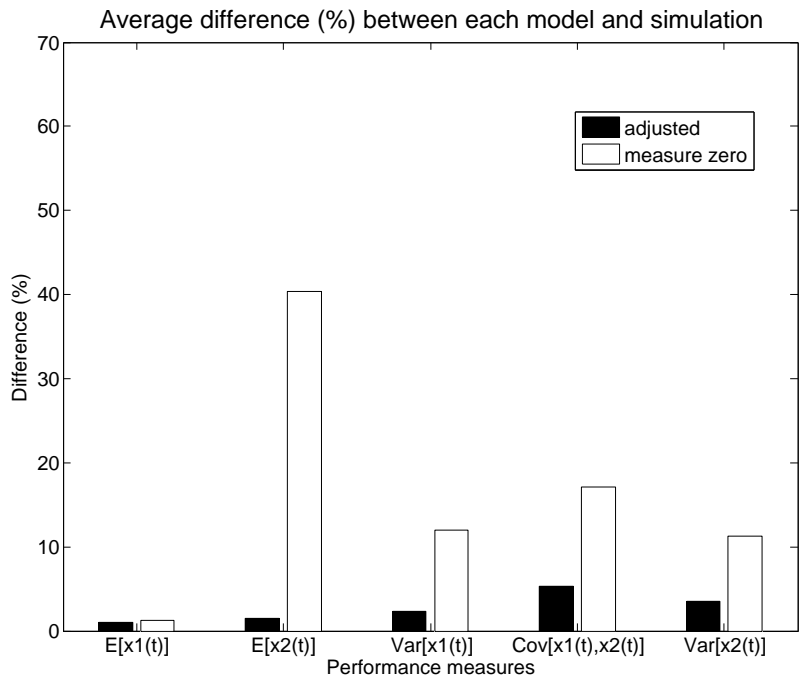}}
      \subfigure[Average difference at a critically loaded point]{
		\includegraphics[width = .8\textwidth]{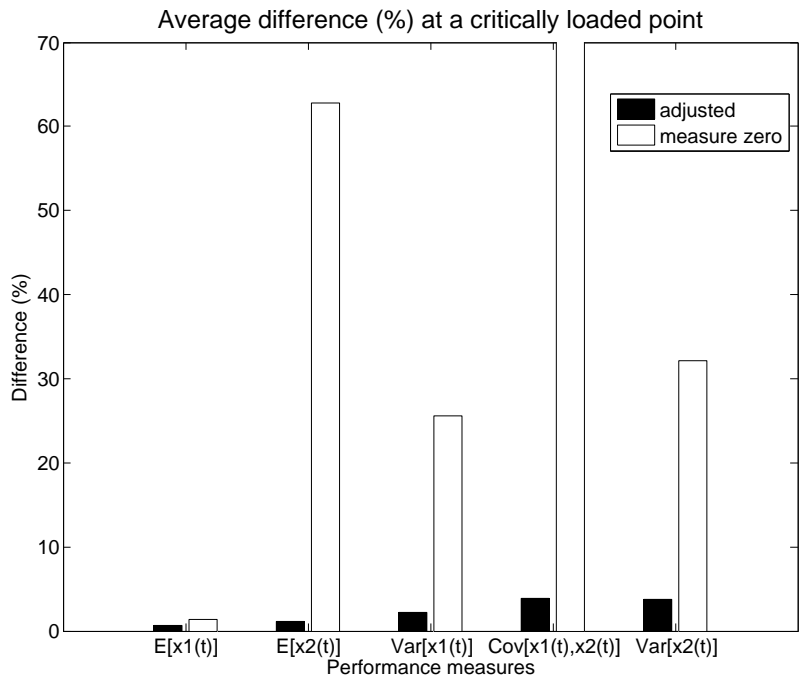}}
\caption{Average difference against simulation} \label{fig_avgresult}
\end{figure}
Figure \ref{fig_avgresult} illustrates the average percentile difference of both methods against the simulation. Figure \ref{fig_avgresult} (a) is obtained by averaging all differences in the tables (Appendix \ref{app_table} ) across time. From Figure \ref{fig_avgresult} (a), we notice that our proposed method shows promise relative to the method assuming measure zero. However, in order to clearly see the effectiveness our proposed methodology, we select the experiments 2, 4, 6, 7, 8, and 9 where lingering near the critically loaded phase occurs. We graph the differences at a critically loaded time point for those. Since the average differences are obtained from our limited experiments, it does not provide an absolute comparison between two methods. Nonetheless, we can notice that our method provides accurate estimation results consistently, but the method with measure-zero assumption results in vast inaccuracy. Note that, in Figure \ref{fig_avgresult}  (b), huge estimation difference, more than $300\%$, is observed when estimating $Cov[x_1(t),x_2(t)]$ using the method with measure-zero. However, the graph is cropped at the $70\%$ level for the illustration purpose.
\section{Conclusion} \label{sec_conclusion}
In this paper, we initially explain the strong approximations used in the analysis of multi-server queues with abandonments and retrials and show potential problems that one faces in obtaining accuracy and computational tractability especially near the critically loaded phase. The first problem stems from the fact that expectation of a function of a random vector $X$ is not equal to the value of the function of the expectation of $X$. Therefore, unless they are equal or close, the fluid model may not provide an accurate estimation of mean values of the system state. The second problem is caused by non-differentiability of rate functions which prevents applying the diffusion model in \citet{kurtz78} and causes significant estimation errors if we ignore it. Therefore, addressing these problems is quite important in order to develop accurate approximations as well as to achieve computational feasibility. For that, we proposed a methodology to obtain the exact estimation of mean values of system states and an approach to achieve computational tractability.\\
The basic idea of our approach is to construct a new stochastic process which has the fluid limit exactly same as the mean value of the system state. We proved that if rate functions in the original model satisfy the conditions to apply the fluid model, rate functions in the constructed model also satisfy those conditions. Therefore, we can apply the adjusted fluid model if we can apply the existing fluid model. It turns out that there is, in general, no computational method to obtain the adjusted fluid model exactly. Fortunately, there are several previous research studies that show the distribution of limit processes and empirical distributions are close to the Gaussian in multi-server queueing systems and hence we utilize Gaussian density to approximate it. By using Gaussian density, we see that rate functions in the constructed model are smooth and we are able to apply the diffusion model in \citet{kurtz78} even if we could not apply it to the original process.\\
To validate our proposed method, we provide several numerical examples. In the examples, we observe that our proposed method shows great accuracy compared with the fluid and diffusion approximations with measure-zero assumption (which is the only other way in the literature, to the best of our knowledge, that provides computational tractability). Due to space restriction, we have not shown all examples where our method works well. We, however, observe that in some other types of queues other than multi-server queues considered here, e.g. peer-to-peer networks, multi-class queues, the empirical density is not close to the Gaussian density. For those types of queues, one can investigate the properties of specific rate functions that affect the shape of empirical density and can devise a new methodology to find the functions $g_i(\cdot,\cdot)$'s from other density functions in the future. 
\section*{Acknowledgments}
The authors would like to thank Dr. William A. Massey and Dr. Martin I. Reiman for their inputs and valuable discussions. This research was partially supported by NSF grant CMMI-0946935.
\appendix
\appendixpage
\addappheadtotoc
\section{Proof of Lemmas \ref{lem_condition1} and \ref{lem_condition2}} \label{app_lemmas}
 \begin{proof}[Proof of Lemma \ref{lem_condition1}]
To prove this lemma, we need to show that $E\big[|X(t)|\big] \le K\Big(1+\big|E\big[X(t)\big]\big|\Big)$ for $K < \infty$ and $t \le T$. We first show it in the one-dimensional case and then extend it to the $d$-dimensional case.\\
Let, for fixed $t_0\le T$, $X = X(t_0)$ having mean $\mu$ and variance $\sigma^2$, and $f_i(X) = f_i\big(t_0,X(t_0)\big)$. Then, by Cauchy-Schwarz inequality,
\begin{eqnarray}
	E\big[|X|\big] \le \sqrt{E[X^2]} = \sqrt{\mu^2 + \sigma^2} \le  |\mu| + \sigma \le D (1 + |\mu|) \quad \textrm{for } D = \max(1,\sigma) \label{eqn_016}.
\end{eqnarray}
Now, we have the one-dimensional case and can move to the $d$-dimensional case. Suppose $X$ has a mean vector $\mu$ and a covariance matrix $\Sigma$ such that $X=(x_1, \ldots, x_d)'$, $\mu = (\mu_1, \ldots, \mu_d)'$. Then,
\begin{eqnarray}
	E\big[|X|\big] &=& E\bigg[\sqrt{\sum_{i=1}^{d}x_i^2}\bigg] \le E\bigg[\sum_{i=1}^{d}|x_i|\bigg] = \sum_{i=1}^{d} E\big[|x_i|\big] \nonumber \\
		&\le& D\bigg(d+ \sum_{i=1}^{d} |\mu_i|\bigg) \quad \textrm{by (\ref{eqn_016})} \qquad \textrm{for } D = \max(1,\sigma_1, \ldots, \sigma_d) \nonumber \\
		&\le& D\bigg(d+ d\sqrt{\sum_{i=1}^{d} \mu_{i}^{2}}\bigg) \textrm{by Cauchy-Schwarz inequality} \nonumber \\
		&=& Dd\big(1+|\mu|\big) \label{eqn_017}.
\end{eqnarray}
Now we have $E\big[|X|\big] \le K\Big(1+\big|E[X]\big|\Big)$ for the $d$-dimensional random vector $X$ where $K=Dd$. Then,
\begin{eqnarray}
	\Big|E\big[f_i(X)\big]\Big| &\le& E\Big[\big|f_i(X)\big|\Big] \le C_i + C_i E\big[|X|\big] \quad \textrm{from assumption} \nonumber \\
		&\le& C_i + C_i K \big(1 + |\mu|\big)  \nonumber \le D_i \big(1 + |\mu|\big) \quad \textrm{for } D_i = C_i + C_iK \quad \textrm{by equation (\ref{eqn_017})} \label{eqn_018}\nonumber
\end{eqnarray}
Note $g_i(t_0,\mu) =  E\big[f_i(X)\big]$. Since $|\Sigma|$ is bounded on $t\le T$, if we make $t_0>0$ arbitrary, we prove the lemma. 
\end{proof}
\begin{proof}[Proof of Lemma \ref{lem_condition2}]
For fixed $t_0 \le T$, let $X = X(t_0)$ and $Y = Y(t_0)$ and suppose $X$ and $Y$ have a same base distribution $H_0$ (we use $H$ instead of $F$ to avoid confusion with $F$ in (\ref{eqn_F})) where $E[X] = \mu_1$ and $E[Y]=\mu_2$. Then, the distribution $H_1$ of $X$ and $H_2$ of $Y$ satisfy
\begin{eqnarray*}
	H_1(x) &=& H_0(x-\mu_1), \quad \textrm{and} \\
	H_2(y) &=& H_0(y-\mu_2),
\end{eqnarray*}
respectively.
Now, we have
\begin{eqnarray*}
	\Big|E\big[F(X)\big]-E\big[F(Y)\big]\Big| &=& \bigg|\int_{\mathbf{R}^d} F(x) dH_1 - \int_{\mathbf{R}^d} F(y) dH_2\bigg|. \label{eqn_021}
\end{eqnarray*}
By transforming variables,
\begin{eqnarray}
	\Big|E\big[F(X)\big]-E\big[F(Y)\big]\Big| &=& \bigg|\int_{\mathbf{R}^d} F(x+\mu_1) dH_0 - \int_{\mathbf{R}^d} F(y+\mu_2) dH_0\bigg| \nonumber \\
		&=& \bigg|\int_{\mathbf{R}^d} \big(F(x+\mu_1) - F(x+\mu_2)\big) dH_0\bigg| \quad \textrm{by linearity}, \nonumber \\
		&\le& \int_{\mathbf{R}^d} \bigg|\big(F(x+\mu_1) - F(x+\mu_2)\big)\bigg| dH_0 \nonumber \\
		&\le& M \int_{\mathbf{R}^d} |\mu_1-\mu_2| dH_0 = M|\mu_1 - \mu_2| \quad \textrm{by assumption}. \label{eqn_022} \nonumber
\end{eqnarray}
Note $G\big(t_0, \mu_1 \big) = E\big[F(X)\big]$ and $G\big(t_0, \mu_2 \big) = E\big[F(Y)\big]$. Then, by making $t_0>0$ arbitrary, we prove the second part, i.e. if $|F(t,x)-F(t,y)|\le M|x-y|$ then $|G(t,x)-G(t,y)|\le M|x-y|$.
We can prove the first part, i.e. if $|f_i(t,x)-f_i(t,y)| \le M |x-y|$, then $|g_i(t,x) - g_i(t,y)| \le M|x-y|$, in a similar fashion and hence we have the lemma.
\end{proof}
\section{Derivation of $g_i(t,x)$'s} \label{app_gi}
For fixed $t_0>0$, let $n=n_{t_0}$, $\mu_1 = \mu_{t_0}^1$, $\beta = \beta_{t_0}$, $p=p_{t_0}$, $x_1 = x_1(t_0) \sim N(z_1, \sigma_1^2)$, and $x_2 = x_2(t_0) \sim N(z_2, \sigma_2^2)$. For $z=(z_1,z_2)'$, we have
\begin{eqnarray}
	g_3\big(t_0, z\big) &=& E\big[\mu_1(x_1 \wedge n)\big] = \mu_1\Big\{E[x_1\mathbb{I}_{x_1 \le n}]+nPr[x_1 > n]\Big\} \nonumber  \\
		&=& \mu_1\Bigg[\int_{-\infty}^{n} \frac{x}{\sqrt{2\pi}\sigma_1}\exp\bigg(-\frac{(x-z_1)^2}{2\sigma_1^2}\bigg) dx + nPr[x_1 > n]\Bigg] \nonumber \\
                &=& \mu_1\Bigg[\frac{-\sigma_1}{\sqrt{2\pi}}\int_{-\infty}^{n} -\frac{x-z_1}{\sigma_1^2}\exp\bigg(-\frac{(x-z_1)^2}{2\sigma_1^2}\bigg) dx + z_1Pr[x_1 \le n] + nPr[x_1 > n]\Bigg] \nonumber \\
                &=& \mu_1 \Bigg[-\sigma_1^2 \frac{1}{\sqrt{2\pi}\sigma_1} \exp\bigg(-\frac{(n-z_1)^2}{2\sigma_1^2}\bigg)+(z_1-n)Pr(x_1\le n) + n\Bigg]. \nonumber
\end{eqnarray}
Therefore, by making $t_0>0$ arbitrary, we have $g_3(t,x)$.\\
Note $g_4(\cdot,\cdot)$ and $g_5(\cdot,\cdot)$ are same except a constant part with respect to $x$. Therefore, it is enough to derive $g_5(\cdot,\cdot)$. We can show that
\begin{eqnarray}
	g_5\big(t_0, z\big) &=& E\big[\beta p (x_1 - n)^+\big] = \beta p \big\{E[x_1\vee n]-n\big\} \nonumber \\
        &=& \beta p \Big\{E[x_1\mathbb{I}_{x_1 > n}]+nPr[x_1 \le n] -n\Big\} \nonumber  \\
		&=& \beta p \Bigg[\int_{n}^{\infty} \frac{x}{\sqrt{2\pi}\sigma_1}\exp\bigg(-\frac{(x-z_1)^2}{2\sigma_1^2}\bigg) dx + nPr[x_1 \le n] -n\Bigg] \nonumber \\
                &=& \beta p \Bigg[\frac{-\sigma_1}{\sqrt{2\pi}}\int_{n}^{\infty} -\frac{x-z_1}{\sigma_1^2}\exp\bigg(-\frac{(x-z_1)^2}{2\sigma_1^2}\bigg) dx + z_1Pr[x_1 > n] + nPr[x_1 \le n] -n \Bigg] \nonumber \\
                &=& \beta p \Bigg[\sigma_1^2 \frac{1}{\sqrt{2\pi}\sigma_1} \exp\bigg(-\frac{(n-z_1)^2}{2\sigma_1^2}\bigg)+(z_1-n)Pr(x_1 > n) \Bigg]. \nonumber
\end{eqnarray}
Therefore, by making $t_0>0$ arbitrary, we have $g_5(t,x)$.
\section{Numerical results for Section \ref{sec_numerical}} \label{app_table}
\begin{table}[htdp]
\caption{Estimation of $E\big[x_1(t)\big]$ over time; difference from simulation}
\begin{center}
\footnotesize
\begin{tabular}{|c|c|c|c|c|c|c|c|c|c|c|c|}
\hline
\multicolumn{2}{|c|}{Experiments} & \multicolumn{10}{|c|}{Time ($t$)} \\ \hline 
\# & type & 6 & 7 & 8 & 9 & 10 & 11 & 12 & 13 & 14 & 15 \\ \hline
\multirow{2}{*}{1} & proposed & 6.52 & 0.98 & -3.39 & -1.07 & -3.05 & -0.40 & 0.91 & 0.25 & -0.69 & -0.01 \\ \cline{2-12}
& meas. 0 &4.42 & 0.82 & -3.63 & -1.94 & -3.60 & -0.23 & 0.75 & -0.15 & -2.59 & 0.11 \\ \hline
\multirow{2}{*}{2} & proposed & 2.69 & 0.44 & -3.13 & -0.82 & -1.08 & -0.32 & 0.48 & 0.15 & -0.46 & -0.05 \\ \cline{2-12}
& meas. 0 &3.35 & -0.42 & -2.92 & -1.64 & -1.18 & -1.01 & 0.85 & -0.44 & -0.36 & -0.60 \\ \hline
\multirow{2}{*}{3} & proposed & 2.33 & 0.28 & -3.11 & -1.01 & -1.36 & -0.39 & 0.10 & -0.02 & -1.68 & -0.15 \\ \cline{2-12}
& meas. 0 &2.34 & -0.42 & -2.67 & -1.55 & -1.49 & -1.00 & 0.52 & -0.49 & -0.54 & -0.53 \\ \hline
\multirow{2}{*}{4} & proposed & 1.18 & 0.14 & -1.54 & -0.30 & -0.01 & 0.12 & 0.22 & 0.22 & -0.10 & -0.02 \\ \cline{2-12}
& meas. 0 &0.65 & -0.96 & -1.98 & -1.32 & -0.94 & -0.95 & 0.04 & -0.64 & -0.61 & -0.94 \\ \hline
\multirow{2}{*}{5} & proposed & 7.04 & 1.36 & -3.67 & -0.69 & -1.38 & -0.57 & 0.80 & 0.23 & -2.82 & -0.63 \\ \cline{2-12}
& meas. 0 &5.55 & 1.04 & -3.20 & -0.93 & -1.31 & -0.53 & 0.46 & 0.06 & -1.22 & -0.18 \\ \hline
\multirow{2}{*}{6} & proposed & 3.61 & 0.76 & -3.05 & -1.13 & -0.67 & 0.18 & 1.12 & 0.20 & -0.95 & -0.25 \\ \cline{2-12}
& meas. 0 &2.53 & -0.07 & -3.01 & -1.72 & -1.46 & -0.43 & 0.60 & -0.47 & -1.57 & -0.80 \\ \hline
\multirow{2}{*}{7} & proposed & 1.93 & 0.65 & -1.06 & -0.25 & -0.63 & 0.17 & 0.12 & -0.21 & -0.65 & -0.20 \\ \cline{2-12}
& meas. 0 &0.50 & -0.86 & -2.07 & -1.51 & -1.04 & -0.73 & -0.47 & -1.07 & -0.63 & -0.76 \\ \hline
\multirow{2}{*}{8} & proposed & 0.72 & 0.07 & -0.46 & 0.04 & -0.04 & -0.14 & 0.42 & -0.07 & -0.48 & -0.01 \\ \cline{2-12}
& meas. 0 &0.04 & -0.98 & -1.40 & -0.91 & -0.57 & -0.85 & -0.13 & -0.69 & -0.73 & -0.46 \\ \hline
\multirow{2}{*}{9} & proposed & 0.81 & 0.25 & -0.96 & -0.25 & -0.11 & -0.09 & 0.38 & -0.06 & -0.24 & -0.02 \\ \cline{2-12}
& meas. 0 &0.53 & -0.50 & -1.31 & -0.88 & -0.34 & -0.61 & 0.17 & -0.51 & -0.06 & -0.32 \\ \hline
\multirow{2}{*}{10} & proposed & 6.44 & 1.18 & -4.73 & -1.73 & -2.21 & -0.45 & 0.30 & -0.01 & -1.10 & -0.11 \\ \cline{2-12}
& meas. 0 &6.46 & 0.77 & -3.83 & -1.62 & -2.84 & -0.83 & 0.84 & 0.00 & -2.77 & -0.60 \\
\hline
\end{tabular}
\end{center}
\normalsize
\label{tab_mean_x1}
\end{table}

\begin{table}[htdp]
\caption{Estimation of $E\big[x_2(t)\big]$ over time; difference from simulation}
\begin{center}
\footnotesize
\begin{tabular}{|c|c|c|c|c|c|c|c|c|c|c|c|}
\hline
\multicolumn{2}{|c|}{Experiments} & \multicolumn{10}{|c|}{Time ($t$)} \\ \hline 
\# & type & 6 & 7 & 8 & 9 & 10 & 11 & 12 & 13 & 14 & 15 \\ \hline
\multirow{2}{*}{1} & proposed & -2.00 & 3.50 & 2.36 & -0.53 & 0.57 & -1.00 & -0.99 & -0.30 & -0.44 & -0.76 \\ \cline{2-12}
& meas. 0 &11.68 & 12.60 & 7.38 & 5.88 & 11.64 & 8.18 & 5.24 & 6.29 & 10.47 & 7.82 \\ \hline
\multirow{2}{*}{2} & proposed & -2.22 & 2.71 & 1.90 & -2.44 & -0.94 & -1.82 & -0.91 & -0.10 & -0.38 & -0.76 \\ \cline{2-12}
& meas. 0 &45.00 & 53.07 & 33.49 & 37.12 & 41.73 & 44.51 & 31.55 & 37.48 & 40.57 & 43.21 \\ \hline
\multirow{2}{*}{3} & proposed & -2.49 & 1.88 & 1.00 & -3.58 & -2.09 & -3.32 & -3.08 & -3.01 & -3.15 & -4.02 \\ \cline{2-12}
& meas. 0 &28.64 & 37.65 & 19.44 & 21.88 & 24.73 & 28.38 & 16.37 & 21.45 & 22.77 & 26.96 \\ \hline
\multirow{2}{*}{4} & proposed & 0.24 & 2.66 & 1.35 & -1.68 & -0.91 & -0.19 & 0.25 & 0.45 & 0.02 & -0.53 \\ \cline{2-12}
& meas. 0 &67.95 & 69.81 & 47.03 & 51.81 & 56.69 & 59.66 & 45.16 & 50.75 & 54.48 & 57.27 \\ \hline
\multirow{2}{*}{5} & proposed & -1.01 & 4.41 & 3.16 & -0.05 & 1.55 & 0.57 & -0.58 & -0.07 & -0.09 & -1.63 \\ \cline{2-12}
& meas. 0 &9.61 & 12.28 & 7.50 & 5.73 & 11.28 & 9.42 & 5.36 & 6.00 & 9.51 & 8.02 \\ \hline
\multirow{2}{*}{6} & proposed & -2.63 & 2.48 & 2.23 & -2.39 & -1.32 & -0.84 & -0.12 & 1.04 & 0.89 & -0.04 \\ \cline{2-12}
& meas. 0 &44.23 & 51.84 & 32.45 & 35.11 & 39.27 & 43.72 & 31.00 & 35.94 & 38.83 & 41.83 \\ \hline
\multirow{2}{*}{7} & proposed & 0.33 & 3.42 & 3.00 & 1.01 & 0.70 & 0.25 & 0.41 & 0.71 & 0.27 & -0.17 \\ \cline{2-12}
& meas. 0 &78.08 & 78.96 & 60.84 & 64.86 & 69.59 & 71.19 & 59.15 & 63.20 & 67.42 & 68.95 \\ \hline
\multirow{2}{*}{8} & proposed & 2.81 & 3.03 & 2.40 & 1.45 & 1.29 & 0.58 & -0.08 & 0.41 & 0.12 & -1.11 \\ \cline{2-12}
& meas. 0 &92.68 & 90.60 & 73.97 & 77.06 & 80.98 & 81.48 & 70.82 & 74.24 & 77.96 & 78.55 \\ \hline
\multirow{2}{*}{9} & proposed & -0.86 & 1.25 & 1.44 & -0.77 & -0.19 & -0.18 & 0.08 & 0.84 & 0.42 & 0.41 \\ \cline{2-12}
& meas. 0 &80.15 & 79.90 & 57.59 & 62.03 & 67.09 & 68.91 & 55.09 & 59.98 & 64.19 & 66.14 \\ \hline
\multirow{2}{*}{10} & proposed & -2.67 & 6.62 & 3.79 & -2.50 & -0.49 & -2.18 & -1.99 & -1.35 & -1.38 & -1.21 \\ \cline{2-12}
& meas. 0 &8.53 & 23.91 & 10.73 & 8.77 & 10.78 & 13.05 & 5.67 & 8.96 & 9.13 & 11.69 \\
\hline
\end{tabular}
\end{center}
\normalsize
\label{tab_mean_x2}
\end{table}

\begin{table}[htdp]
\caption{Estimation of $Var\big[x_1(t)\big]$ over time; difference from simulation}
\begin{center}
\footnotesize
\begin{tabular}{|c|c|c|c|c|c|c|c|c|c|c|c|}
\hline
\multicolumn{2}{|c|}{Experiments} & \multicolumn{10}{|c|}{Time ($t$)} \\ \hline 
\# & method & 6 & 7 & 8 & 9 & 10 & 11 & 12 & 13 & 14 & 15 \\ \hline
\multirow{2}{*}{1} & proposed & 6.94 & 0.94 & -1.92 & -2.02 & -3.66 & -0.20 & 1.70 & -1.02 & 0.49 & 2.89 \\ \cline{2-12}
& meas. 0 &-11.03 & 2.93 & -1.93 & 17.31 & -24.44 & 1.42 & 1.66 & 14.89 & -19.73 & 4.16 \\ \hline
\multirow{2}{*}{2} & proposed & 2.84 & 3.83 & -6.05 & -0.10 & -0.50 & 4.24 & 1.62 & 2.67 & -1.02 & 1.62 \\ \cline{2-12}
& meas. 0 &-6.28 & 16.69 & 6.76 & -14.45 & -12.97 & 17.62 & 12.90 & -11.61 & -14.51 & 15.29 \\ \hline
\multirow{2}{*}{3} & proposed & 4.15 & 2.09 & -0.60 & 2.15 & -6.57 & 0.50 & -3.10 & 1.15 & 2.76 & 3.74 \\ \cline{2-12}
& meas. 0 &-0.56 & 13.38 & 7.30 & -8.74 & -12.97 & 11.92 & 4.53 & -10.16 & -2.13 & 14.22 \\ \hline
\multirow{2}{*}{4} & proposed & -0.52 & -4.36 & -2.81 & 3.07 & -0.03 & 2.96 & 0.79 & 1.27 & 3.30 & 0.35 \\ \cline{2-12}
& meas. 0 &-16.38 & 11.18 & 14.13 & -12.86 & -17.81 & 17.93 & 16.94 & -15.33 & -14.05 & 15.32 \\ \hline
\multirow{2}{*}{5} & proposed & 6.83 & -0.22 & -2.49 & 0.09 & -1.67 & -3.27 & 1.71 & -4.14 & -0.55 & 1.98 \\ \cline{2-12}
& meas. 0 &-2.30 & 1.03 & -1.69 & 10.07 & -10.59 & -1.97 & 1.43 & 5.09 & -7.69 & 3.42 \\ \hline
\multirow{2}{*}{6} & proposed & 5.22 & 0.62 & -6.25 & -0.81 & -4.32 & -1.95 & 4.41 & 1.97 & -0.61 & 4.93 \\ \cline{2-12}
& meas. 0 &-1.19 & 7.72 & 1.39 & -7.42 & -11.70 & 5.61 & 10.28 & -4.33 & -7.73 & 12.15 \\ \hline
\multirow{2}{*}{7} & proposed & 2.91 & -2.29 & -1.04 & 0.92 & 0.21 & 0.18 & 3.14 & -1.10 & 4.36 & 2.28 \\ \cline{2-12}
& meas. 0 &-17.83 & 14.52 & 18.27 & -16.55 & -22.37 & 17.07 & 20.88 & -18.77 & -18.14 & 19.01 \\ \hline
\multirow{2}{*}{8} & proposed & -1.79 & 0.65 & -0.43 & 0.83 & 3.35 & -0.71 & 3.63 & 2.10 & 1.85 & 0.72 \\ \cline{2-12}
& meas. 0 &-26.38 & 16.44 & 21.26 & -18.37 & -22.66 & 16.73 & 23.72 & -17.42 & -25.80 & 18.25 \\ \hline
\multirow{2}{*}{9} & proposed & 0.62 & -0.86 & -0.83 & 3.53 & 3.36 & 5.09 & 1.52 & 1.71 & 2.73 & -1.37 \\ \cline{2-12}
& meas. 0 &-17.84 & 13.72 & 17.09 & -14.12 & -17.40 & 19.78 & 18.07 & -16.57 & -19.03 & 14.36 \\ \hline
\multirow{2}{*}{10} & proposed & 4.48 & -0.32 & -9.84 & 1.26 & -4.24 & 3.37 & 1.32 & 1.00 & 0.22 & 1.15 \\ \cline{2-12}
& meas. 0 &4.12 & 7.27 & -6.22 & -2.69 & -5.55 & 10.87 & 3.68 & -3.27 & -2.12 & 8.98 \\
\hline
\end{tabular}
\end{center}
\normalsize
\label{tab_var_x1}
\end{table}

\begin{table}[htdp]
\caption{Estimation of $Cov\big[x_1(t),x_2(t)\big]$ over time; difference from simulation}
\begin{center}
\footnotesize
\begin{tabular}{|c|c|c|c|c|c|c|c|c|c|c|c|}
\hline
\multicolumn{2}{|c|}{Experiments} & \multicolumn{10}{|c|}{Time ($t$)} \\ \hline 
\# & type & 6 & 7 & 8 & 9 & 10 & 11 & 12 & 13 & 14 & 15 \\ \hline
\multirow{2}{*}{1} & proposed & -3.03 & -3.27 & 4.75 & 3.10 & -1.72 & -3.63 & 0.39 & -4.00 & -4.15 & 0.91 \\ \cline{2-12}
& meas. 0 &25.05 & -3.88 & 4.43 & -6.75 & 15.78 & -4.69 & -0.30 & -11.23 & 4.26 & -2.03 \\ \hline
\multirow{2}{*}{2} & proposed & -6.76 & 7.23 & 2.87 & -4.73 & 11.50 & -0.47 & 6.09 & 5.48 & 5.86 & 4.82 \\ \cline{2-12}
& meas. 0 &29.60 & -6.12 & -6.56 & 3.81 & 36.90 & -13.05 & -1.41 & 12.63 & 31.69 & -5.53 \\ \hline
\multirow{2}{*}{3} & proposed & -6.74 & -2.24 & 4.53 & -9.51 & -28.97 & -3.44 & -2.57 & -6.43 & 5.52 & -3.76 \\ \cline{2-12}
& meas. 0 &25.67 & -15.55 & 0.47 & 6.26 & 6.46 & -15.26 & -6.35 & 8.68 & 31.03 & -13.44 \\ \hline
\multirow{2}{*}{4} & proposed & -0.01 & -13.57 & -8.53 & -12.42 & -9.73 & -0.00 & -10.28 & -16.74 & -1.43 & -11.64 \\ \cline{2-12}
& meas. 0 &58.61 & -29.39 & -21.29 & 10.14 & 44.87 & -14.34 & -22.28 & 5.51 & 46.98 & -26.05 \\ \hline
\multirow{2}{*}{5} & proposed & -7.19 & -0.18 & 0.13 & 2.88 & 7.13 & -8.20 & -0.42 & 1.07 & 4.17 & -1.88 \\ \cline{2-12}
& meas. 0 &19.73 & -4.03 & -1.03 & -12.97 & 26.24 & -11.11 & -1.32 & -12.31 & 20.40 & -4.25 \\ \hline
\multirow{2}{*}{6} & proposed & 2.91 & 4.90 & 2.63 & -7.64 & -7.99 & 1.17 & 2.14 & 3.80 & 6.94 & 7.92 \\ \cline{2-12}
& meas. 0 &37.93 & -15.54 & -15.96 & -3.32 & 25.88 & -18.86 & -15.00 & 6.43 & 34.79 & -10.85 \\ \hline
\multirow{2}{*}{7} & proposed & -4.38 & -1.88 & 0.97 & -14.36 & 3.15 & -1.95 & -1.19 & -1.08 & -4.77 & -0.46 \\ \cline{2-12}
& meas. 0 &52.91 & -16.88 & -16.53 & -3.19 & 43.26 & -15.54 & -16.02 & 6.73 & 34.99 & -12.29 \\ \hline
\multirow{2}{*}{8} & proposed & -20.99 & -4.21 & -6.33 & -4.51 & 3.12 & -3.43 & -1.85 & -6.78 & -1.81 & -0.79 \\ \cline{2-12}
& meas. 0 &64.94 & -8.85 & -22.74 & 13.30 & 51.79 & -11.89 & -15.66 & 7.80 & 44.16 & -8.72 \\ \hline
\multirow{2}{*}{9} & proposed & -15.01 & -6.15 & -6.27 & 3.33 & -0.25 & 2.45 & -6.00 & -7.57 & -6.93 & -6.74 \\ \cline{2-12}
& meas. 0 &55.84 & -12.34 & -17.97 & 19.55 & 45.76 & -4.17 & -15.42 & 7.67 & 37.97 & -12.70 \\ \hline
\multirow{2}{*}{10} & proposed & -18.70 & 7.57 & -3.70 & -4.76 & 8.09 & -6.43 & -2.86 & -0.03 & 2.95 & -1.11 \\ \cline{2-12}
& meas. 0 &-21.43 & -2.63 & -5.67 & -0.67 & 8.99 & -15.71 & -4.40 & 3.66 & 4.18 & -9.87 \\
\hline
\end{tabular}
\end{center}
\normalsize
\label{tab_cov}
\end{table}

\begin{table}[htdp]
\caption{Estimation of $Var\big[x_2(t)\big]$ over time; difference from simulation}
\begin{center}
\footnotesize
\begin{tabular}{|c|c|c|c|c|c|c|c|c|c|c|c|}
\hline
\multicolumn{2}{|c|}{Experiments} & \multicolumn{10}{|c|}{Time ($t$)} \\ \hline 
\# & type & 6 & 7 & 8 & 9 & 10 & 11 & 12 & 13 & 14 & 15 \\ \hline
\multirow{2}{*}{1} & proposed & -2.15 & 3.52 & 1.48 & -0.72 & -0.34 & -0.45 & 0.78 & 1.59 & 1.31 & 0.83 \\ \cline{2-12}
& meas. 0 &6.74 & 5.31 & 2.34 & -8.01 & 5.46 & 1.00 & 1.84 & -2.78 & 2.81 & -1.20 \\ \hline
\multirow{2}{*}{2} & proposed & 1.29 & 9.81 & 8.50 & 3.31 & 7.72 & 6.70 & 6.05 & 5.84 & 5.42 & 5.91 \\ \cline{2-12}
& meas. 0 &7.06 & 14.88 & -6.56 & -0.09 & 17.07 & 12.12 & -3.90 & 5.44 & 16.94 & 13.19 \\ \hline
\multirow{2}{*}{3} & proposed & -5.60 & 2.13 & -0.71 & -5.14 & -1.93 & -3.21 & -2.71 & -1.18 & -0.61 & -0.79 \\ \cline{2-12}
& meas. 0 &-2.22 & 4.53 & -10.37 & -5.14 & 4.01 & 1.00 & -8.89 & 0.66 & 6.45 & 5.96 \\ \hline
\multirow{2}{*}{4} & proposed & 5.71 & 8.34 & 2.63 & -2.01 & -0.83 & 2.01 & -0.03 & -0.27 & 0.51 & 2.66 \\ \cline{2-12}
& meas. 0 &28.49 & 26.24 & -13.87 & -3.06 & 15.93 & 15.77 & -12.78 & 0.50 & 17.18 & 17.62 \\ \hline
\multirow{2}{*}{5} & proposed & -0.97 & 4.30 & 1.25 & -0.07 & 3.22 & 3.50 & -0.22 & -0.16 & 1.50 & 0.76 \\ \cline{2-12}
& meas. 0 &3.67 & 5.28 & 1.60 & -4.35 & 7.92 & 6.18 & 1.65 & -2.36 & 5.70 & 4.03 \\ \hline
\multirow{2}{*}{6} & proposed & 2.23 & 11.10 & 8.33 & 2.94 & 4.21 & 4.30 & 1.38 & 2.79 & 3.63 & 5.03 \\ \cline{2-12}
& meas. 0 &11.13 & 21.61 & -3.19 & -0.25 & 12.83 & 14.35 & -6.15 & 1.31 & 12.95 & 14.75 \\ \hline
\multirow{2}{*}{7} & proposed & 5.23 & 7.48 & 5.57 & 1.60 & 2.20 & 4.24 & 3.45 & 4.88 & 5.03 & 4.33 \\ \cline{2-12}
& meas. 0 &33.03 & 27.56 & -16.08 & -4.39 & 21.67 & 19.11 & -11.03 & 2.36 & 24.85 & 19.64 \\ \hline
\multirow{2}{*}{8} & proposed & 10.11 & 7.03 & 3.99 & 2.44 & 3.47 & 2.45 & 2.53 & 2.25 & 1.73 & 2.30 \\ \cline{2-12}
& meas. 0 &62.03 & 46.52 & -13.63 & 0.58 & 30.10 & 23.25 & -13.94 & -0.09 & 26.38 & 20.60 \\ \hline
\multirow{2}{*}{9} & proposed & 8.18 & 7.49 & 3.22 & 0.53 & 3.83 & 4.55 & 3.14 & 4.24 & 4.90 & 5.28 \\ \cline{2-12}
& meas. 0 &39.93 & 31.88 & -18.20 & -4.36 & 22.39 & 18.66 & -12.73 & 1.21 & 23.00 & 18.98 \\ \hline
\multirow{2}{*}{10} & proposed & -0.34 & 12.31 & 5.05 & -2.01 & 1.38 & 1.13 & -3.01 & -3.64 & -4.35 & -1.66 \\ \cline{2-12}
& meas. 0 &-5.73 & 7.15 & -1.91 & -3.68 & 0.93 & -2.52 & -8.00 & -4.34 & -3.94 & -5.72 \\
\hline
\end{tabular}
\end{center}
\normalsize
\label{tab_var_x2}
\end{table}
\newpage
\bibliographystyle{plainnat}
\bibliography{multi}

\begin{thebibliography}{24}
\providecommand{\natexlab}[1]{#1}
\providecommand{\url}[1]{\texttt{#1}}
\expandafter\ifx\csname urlstyle\endcsname\relax
  \providecommand{\doi}[1]{doi: #1}\else
  \providecommand{\doi}{doi: \begingroup \urlstyle{rm}\Url}\fi

\bibitem[Arnold(1992)]{arnold92}
Ludwig Arnold.
\newblock \emph{Stochastic Differential Equations: Theory and Applications}.
\newblock Krieger Publishing Company, 1992.

\bibitem[Billingsley(1999)]{bill99}
Patrick Billingsley.
\newblock \emph{Convergence of Probability Measures}.
\newblock A John Wiley \& Sons, Inc., Publication, 1999.

\bibitem[Ethier and Kurtz(1986)]{ethier86}
Stewart~N. Ethier and Thomas~G. Kurtz.
\newblock \emph{Markov Processes: Characterization and Convergence}.
\newblock A John Wiley \& Sons, Inc., Publication, 1 edition, 1986.

\bibitem[Folland(1999)]{folland99}
Gerald~B. Folland.
\newblock \emph{Real Analysis : Modern Techniques and Their Applications}.
\newblock A John Wiley \& Sons, Inc., Publication, 2 edition, 1999.

\bibitem[Garnet et~al.(2002)Garnet, Mandelbaum, and Reiman]{Garnet:2002p1103}
O.~Garnet, Avi Mandelbaum, and Martin~I. Reiman.
\newblock {Designing a Call Center with Impatient Customers}.
\newblock \emph{Manufacturing {\&} Service Operations Management}, 4\penalty0
  (3):\penalty0 208--227, Jan 2002.

\bibitem[Halfin and Whitt(1981)]{Halfin81}
Shlomo Halfin and Ward Whitt.
\newblock {Heavy-Traffic Limits for Queues with Many Exponential Servers}.
\newblock \emph{Operations Research}, 29\penalty0 (3):\penalty0 567--588, May
  1981.

\bibitem[Hampshire et~al.(2009)Hampshire, Jennings, and
  Massey]{Hampshire:2009p1879}
Robert~C. Hampshire, Otis~B. Jennings, and William~A. Massey.
\newblock {A Time-Varying Call Center Design via Lagrangian Mechanics}.
\newblock \emph{Probability in the Engineering and Informational Sciences},
  23\penalty0 (02):\penalty0 231--259, Jan 2009.

\bibitem[Iglehart(1965)]{Iglehart65}
Donald~L Iglehart.
\newblock {Limiting Diffusion Approximations for the Many Server Queue and the
  Repairman Problem}.
\newblock \emph{Journal of Applied Probability}, 2\penalty0 (2):\penalty0
  429--441, 1965.

\bibitem[Kurtz(1978)]{kurtz78}
Thomas~G. Kurtz.
\newblock {Strong approximation theorems for density dependent Markov chains}.
\newblock \emph{Stochastic Processes and their Applications}, 6\penalty0
  (3):\penalty0 223--240, Feb 1978.

\bibitem[Mandelbaum and Pats(1995)]{mandelbaum95}
Avi Mandelbaum and Gennady Pats.
\newblock {State-dependent queues: approximations and applications}.
\newblock \emph{Institute for Mathematics and Its Applications}, 71:\penalty0
  239--282, 1995.

\bibitem[Mandelbaum and Pats(1998)]{mandelbaum98}
Avi Mandelbaum and Gennady Pats.
\newblock {State-Dependent Stochastic Networks. Part I: Approximations and
  Applications with Continuous Diffusion Limits}.
\newblock \emph{The Annals of Applied Probability}, 8\penalty0 (2):\penalty0
  569--646, May 1998.

\bibitem[Mandelbaum et~al.(1998)Mandelbaum, Massey, and
  Reiman]{Mandelbaum:1998p1029}
Avi Mandelbaum, William~A. Massey, and Martin~I. Reiman.
\newblock {Strong approximations for Markovian service networks}.
\newblock \emph{Queueing Systems}, 30\penalty0 (1-2):\penalty0 149--201, Jan
  1998.

\bibitem[Mandelbaum et~al.(2002)Mandelbaum, Massey, Reiman, Stolyar, and
  Rider]{Mandelbaum:2002p995}
Avi Mandelbaum, William~A. Massey, Martin~I. Reiman, Alexander Stolyar, and
  Brian Rider.
\newblock {Queue Lengths and Waiting Times for Multiserver Queues with
  Abandonment and Retrials}.
\newblock \emph{Telecommunication Systems}, 21\penalty0 (2-4):\penalty0
  149--171, Jan 2002.

\bibitem[Mandelbaum and Zeltyn(2009)]{Mandelbaum:2009p1992}
Avishai Mandelbaum and Sergey Zeltyn.
\newblock {Staffing Many-Server Queues with Impatient Customers: Constraint
  Satisfaction in Call Centers}.
\newblock \emph{Operations Research}, 57\penalty0 (5):\penalty0 1189--1205,
  September-October 2009.

\bibitem[Massey and Whitt(1998)]{massey98}
William~A Massey and Ward Whitt.
\newblock {Uniform acceleration expansions for {M}arkov chains with
  time-varying rates}.
\newblock \emph{The Annals of Applied Probability}, 8\penalty0 (4):\penalty0
  1130--1155, Nov 1998.

\bibitem[Pang and Whitt(2009)]{Pang:2009p1886}
Guodong Pang and Ward Whitt.
\newblock {Heavy-traffic limits for many-server queues with service
  interruptions}.
\newblock \emph{Queueing Systems}, 61\penalty0 (2-3):\penalty0 167--202, Mar
  2009.

\bibitem[Puhalskii and Reiman(2000)]{Puhalskii:2000p1880}
Anatolii~A. Puhalskii and Martin~I. Reiman.
\newblock {The multiclass GI/PH/N queue in the Halfin-Whitt regime}.
\newblock \emph{Advances in Applied Probability}, 32\penalty0 (2):\penalty0
  564--595, Jun 2000.

\bibitem[Whitt(2002)]{Whitt02}
Ward Whitt.
\newblock \emph{Stochastic Process Limits}.
\newblock Springer, 1 edition, 2002.

\bibitem[Whitt(2006{\natexlab{a}})]{Whitt04}
Ward Whitt.
\newblock {Efficiency-Driven Heavy-Traffic Approximations for Many-Server
  Queues with Abandonments}.
\newblock \emph{Management Science}, 50\penalty0 (10):\penalty0 1449--1461, Oct
  2006{\natexlab{a}}.

\bibitem[Whitt(2006{\natexlab{b}})]{Whitt06b}
Ward Whitt.
\newblock {Fluid Models for Multiserver Queues with Abandonments}.
\newblock \emph{Operations Research}, 54\penalty0 (1):\penalty0 37--54,
  January-February 2006{\natexlab{b}}.

\bibitem[Whitt(1990)]{Whitt90}
Ward Whitt.
\newblock {Queues with Service Times and Interarrival Times Depending Linearly
  and Randomly Upon Waiting Times}.
\newblock \emph{Queueing Systems}, 6\penalty0 (4):\penalty0 335--351, Dec 1990.

\bibitem[Whitt(1982)]{Whitt:1982p1884}
Ward Whitt.
\newblock {On the heavy-traffic limit theorem for GI/G/$\infty$ queues}.
\newblock \emph{Advances in Applied Probability}, 14\penalty0 (1):\penalty0
  171--190, Mar 1982.

\bibitem[Williams(1991)]{williams91}
David Williams.
\newblock \emph{Probability with Martingales}.
\newblock Cambridge University Press, 1991.

\bibitem[Zeltyn and Mandelbaum(2005)]{Zeltyn:2005p1072}
Sergey Zeltyn and Avi Mandelbaum.
\newblock {Call Centers with Impatient Customers: Many-Server Asymptotics of
  the M/M/n+ G Queue}.
\newblock \emph{Queueing Systems}, 51\penalty0 (3-4):\penalty0 361--402, Jan
  2005.

\end{thebibliography}
\end{document}